\newtheorem{theorem}{Theorem}[section]
\newtheorem{cor}{Corollary}[section]
\newtheorem{proposition}{Proposition}[section]
\newtheorem{example}{Example}[section]
\newtheorem{lemma}{Lemma}[section]
\newtheorem*{claim*}{Claim}
\newtheorem*{proofclaim*}{Proof of Claim}
\newtheorem{defi}{Definition}[section]
\newcommand{\df}{\dfrac}
\theoremstyle{theorem}
\newtheorem{thm}{Theorem}[section]
\newtheorem*{Exercise 11.3.B}{Exercise 11.3.B}
\numberwithin{equation}{section}
\begin{document}
\title[McIntosh's Conjecture]
{Proofs of McIntosh's Conjecture on Franel Integrals and Two Generalizations}
\author{Bruce C.~Berndt, Likun Xie, Alexandru Zaharescu}
\address{Department of Mathematics, University of Illinois, 1409 West Green
Street, Urbana, IL 61801, USA} \email{berndt@illinois.edu}
\address{Department of Mathematics, University of Illinois, 1409 West Green
Street, Urbana, IL 61801, USA} \email{likunx2@illinois.edu}
\address{Department of Mathematics, University of Illinois, 1409 West Green
Street, Urbana, IL 61801, USA; Institute of Mathematics of the Romanian
Academy, P.O.~Box 1-764, Bucharest RO-70700, Romania}
\email{zaharesc@illinois.edu}

\begin{abstract} We provide a proof of a conjecture made by Richard McIntosh in 1996 on the values of the Franel integrals, $$\int_0^1((ax))((bx))((cx))((ex))\,dx,$$ where $((x))$ is the first Bernoulli function defined in \eqref{B1} below.  Secondly, we extend our ideas to prove a similar theorem for
$$\int_0^1((a_1x))((a_2x))\cdots ((a_{n}x))\,dx.$$
  Lastly, we prove a further generalization in which $((x))$ is replaced by any particular Bernoulli function with odd index.
\end{abstract}
  \keywords{Franel integrals, Bernoulli functions}
\subjclass{11B68}
	\maketitle

\section{Introduction}
\begin{defi} If $h$ and $k$ are relatively prime integers with $k\geq1$, the classical Dedekind sum $s(h,k)$ is defined by
\begin{equation*}
s(h,k):=\sum_{n=1}^{k-1}\left(\bigg(\df{hn}{k}\bigg)\right)\left(\bigg(\df{n}{k}\bigg)\right),
\end{equation*}
where the sawtooth function, or first Bernoulli function, $((x))$ is defined by
\begin{equation}\label{B1}
((x))=\begin{cases} x-\lfloor x \rfloor -\frac12,\quad &\text{if $x$ is not an integer},\\
0, &\text{if $x$ is an integer},
\end{cases}
\end{equation}
where $\lfloor x\rfloor $ is the greatest integer $\leq x$.
\end{defi}
Besides their appearance in the modular transformation formula for the Dedekind eta function \cite[p.~2]{rg}, they enjoy a rich theory  \cite{rg}.  Furthermore, there exist many generalizations and analogues of the Dedekind sum $s(h,k)$, many of which appear in the modular transformation formulas of other modular forms.

Dedekind sums satisfy a beautiful reciprocity theorem.

\begin{theorem}\label{reciprocity} Let $h$ and $k$ denote coprime, positive integers.  Then,
\begin{equation*}
s(h,k)+s(k,h)=-\df14+\df{1}{12}\left(\df{h}{k}+\df{1}{hk}+\df{k}{h}\right).
\end{equation*}
\end{theorem}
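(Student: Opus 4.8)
The plan is to reduce the reciprocity law to a single residue computation. The proof proceeds in two stages: first a finite cotangent-sum representation of $s(h,k)$, and then a contour integration that binds the two Dedekind sums together with the rational right-hand side.

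For the first stage I would establish the auxiliary identity
\begin{equation*}
s(h,k)=\frac{1}{4k}\sum_{j=1}^{k-1}\cot\!\left(\frac{\pi j}{k}\right)\cot\!\left(\frac{\pi h j}{k}\right),
\end{equation*}
valid for coprime positive integers $h,k$. This follows from the finite Fourier expansion of the sawtooth at rational arguments,
\begin{equation*}
\left(\!\left(\frac{n}{k}\right)\!\right)=-\frac{1}{2k}\sum_{j=1}^{k-1}\cot\!\left(\frac{\pi j}{k}\right)\sin\!\left(\frac{2\pi j n}{k}\right),
\end{equation*}
which holds for every integer $n\not\equiv 0\pmod k$. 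Substituting this expansion for both factors in the definition of $s(h,k)$ and summing over $n$, the orthogonality relations for $k$-th roots of unity collapse the resulting double sum to a single one; here the substitution $i\equiv jh^{-1}\pmod k$ (available because $\gcd(h,k)=1$) is what produces the product $\cot(\pi j/k)\cot(\pi hj/k)$. The bookkeeping in this collapse is the first place where care is required.

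For the main stage I would apply the residue theorem to $f(z)=\cot(\pi z)\cot(\pi h z)\cot(\pi k z)$, integrated counterclockwise around the rectangle with vertical sides $\operatorname{Re}z=-\varepsilon$ and $\operatorname{Re}z=1-\varepsilon$ and horizontal sides $\operatorname{Im}z=\pm Y$. Since $f$ has period $1$, the two vertical sides cancel, while on the horizontal sides one uses $\cot(\pi z)\to\mp i$ as $\operatorname{Im}z\to\pm\infty$ to see that $f\to i$ on the top and $f\to -i$ on the bottom; letting $Y\to\infty$, the boundary integral tends to $-2i$. Inside the rectangle lie simple poles at $z=j/k$ and $z=j/h$ together with a triple pole at $z=0$, and coprimality of $h,k$ guarantees the points $j/k$ and $j'/h$ are distinct. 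At $z=j/k$ only the factor $\cot(\pi k z)$ is singular, with residue $1/(\pi k)$, so the lemma gives $\sum_{j}\operatorname{Res}_{z=j/k}f=\tfrac{4}{\pi}s(h,k)$, and symmetrically $\tfrac{4}{\pi}s(k,h)$ at the points $z=j/h$. From the Laurent series $\cot(\pi z)=\tfrac{1}{\pi z}-\tfrac{\pi z}{3}-\cdots$ the residue at the triple pole works out to $-\tfrac{1+h^2+k^2}{3\pi hk}$. Equating $-2i$ with $2\pi i$ times the sum of residues and simplifying, using $\tfrac{1+h^2+k^2}{hk}=\tfrac{1}{hk}+\tfrac{h}{k}+\tfrac{k}{h}$, yields the claimed formula.

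I expect the main obstacle to be the order-three pole at the origin together with keeping the normalizing constants consistent: the boundary value $-2i$, the residue $1/(\pi k)$, the factor $4k$ coming out of the lemma, and the coefficient of $z^{-1}$ in the product of three Laurent series must all align, since one stray factor of $\pi$ or sign would corrupt the constant $-\tfrac14$ or the coefficient $\tfrac{1}{12}$. A secondary point to verify is that the horizontal-side integrals genuinely converge to their limiting constants, which holds because $f$ approaches $\pm i$ exponentially fast away from the real axis.
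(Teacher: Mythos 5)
Your proposal is mathematically sound, but it takes a genuinely different route from the one the paper points to: the paper does not prove Theorem \ref{reciprocity} at all, it cites the literature, and the proof it singles out (Rademacher's) is built on the Franel evaluation \eqref{franel}, $\int_0^1((ax))((bx))\,dx=(a,b)^2/(12ab)$ --- one computes $\int_0^1((hx))((kx))\,dx$ a second time by splitting $[0,1]$ at the discontinuities, which produces $s(h,k)$, $s(k,h)$, and elementary terms, and comparing the two evaluations gives reciprocity. Your argument is instead the classical cotangent/contour-integration proof, and I checked that your constants all cohere: the finite Fourier expansion and the resulting identity $s(h,k)=\frac{1}{4k}\sum_{j=1}^{k-1}\cot(\pi j/k)\cot(\pi hj/k)$ are correct (e.g.\ both sides give $1/18$ for $h=1$, $k=3$); the boundary integral of $f(z)=\cot(\pi z)\cot(\pi hz)\cot(\pi kz)$ over your rectangle tends to $-2i$ since $f\to i$ on the top edge (traversed leftward) and $f\to -i$ on the bottom (traversed rightward); the residues are $\frac{4}{\pi}s(h,k)$, $\frac{4}{\pi}s(k,h)$, and $-\frac{1+h^2+k^2}{3\pi hk}$ at the origin, since the only degree $-1$ terms in the product of the three Laurent series $\frac{1}{\pi mz}-\frac{\pi mz}{3}-\cdots$ are the three cross terms; and $-2i=2\pi i\bigl[\frac{4}{\pi}\bigl(s(h,k)+s(k,h)\bigr)-\frac{1+h^2+k^2}{3\pi hk}\bigr]$ rearranges exactly to the stated formula. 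What each approach buys: the Franel route is elementary (real-variable), and is the natural one in this paper since the whole point here is the arithmetic of such integrals, with reciprocity appearing as a first consequence; your contour method requires the discrete Fourier lemma and residue bookkeeping but binds both Dedekind sums and the rational term in a single identity, and it generalizes readily (e.g.\ to cotangent sums with more factors and higher-dimensional Dedekind sums), whereas the integral route generalizes in the direction this paper pursues. Two small points you should make explicit if you write this up: choose $\varepsilon<1/(hk)$ and not equal to any pole abscissa so that the only poles enclosed are $0$, $j/k$ ($1\le j\le k-1$), and $j/h$ ($1\le j\le h-1$) with none on the contour; and note that the pole of $f$ at $z=0$ is triple precisely because the poles of the three factors coincide there, which is why it must be handled by the Laurent expansion rather than by the simple-pole residue rule used at the other points.
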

There are several proofs of Theorem \ref{reciprocity}, but one of them  \cite{rad} or \cite[p.~25]{rg}, \cite[pp.~369--372]{rademachercp} uses an integral evaluation due to J.~Franel \cite{franel}, namely,
\begin{equation}\label{franel}
\int_0^1((ax))((bx))dx=\df{(a,b)^2}{12ab},
\end{equation}
where $(a,b)$ denotes the greatest common divisor of the positive integers $a$ and $b$.

Although \eqref{franel} is not difficult to prove, it is natural to ask if
\begin{equation}\label{franel4}
I(a,b,c,e):=\int_0^1((ax))((bx))((cx))((ex))\,dx,
\end{equation}
can be evaluated, where $a,b,c$, and $e$ denote positive integers. An evaluation of \eqref{franel4} gives an evaluation of the reciprocal sum [P.193, \cite{mcintosh}]
\begin{equation}
	L(a,b,c,e) = \sum_{\stackrel{as+bt+cu+ev=0}{s,t,u,v\in \mathbb{Z}\setminus\{0\}}}\frac{1}{stuv}= 16\pi^4 I(a,b,c,e)
\end{equation}
 R.~J.~McIntosh \cite{mcintosh} found further representations and other properties for  $I(a,b,c,e)$, and `evaluated' special cases in terms of certain generalized Dedekind sums.  Although he was unable to evaluate \eqref{franel4}, he made a fascinating conjecture \cite[p.~194]{mcintosh}. ``Numerical calculations suggest that the function $f(a,b,c,e)$ defined by
\begin{equation}\label{remarkable}
 f(a,b,c,e) :=\frac{240 a^3 b^3c^3e^3 (a,b,c)(a,b,e)(a,c,e)(b,c,e) }{(a,b)^2(a,c)^2 (a,e)^2 (b,c)^2 (b,e)^2(c,e)^2(a,b,c,e)^4} I(a,b,c,e)
 \end{equation}
is integer-valued, but a proof is out of reach."  Our first goal is to prove McIntosh's remarkable conjecture \eqref{remarkable}.

In view of \eqref{franel} and \eqref{franel4}, it is next natural to ask if one can establish arithmetical properties and an evaluation for
\begin{equation}\label{franel2n}
I_n:=I(a_1,a_2,\dots,a_{n}):=\int_0^1((a_1 x))((a_2 x))\cdots ((a_{n}x))dx,
\end{equation}
where $a_1, a_2, \dots, a_{n}$ are positive integers, and $n=2k$ is any even, positive integer. An evaluation of $I_n$ gives an evaluation of the reciprocal sum by \eqref{higher_eq}:
\begin{equation}
	L(a_1,a_2,\dots, a_n)=\sum_{\stackrel{u_i\in\mathbb{Z}\setminus\{0\}}{\sum _{i=0}^n a_iu_i=0}}\frac{1}{u_1u_2\cdots u_{n}}=(-1)^k(2\pi) ^n I_n.
\end{equation} In Theorem \ref{mcintoshgen} below, we offer and prove such a generalization of McIntosh's conjecture. Note that if $n$ is an odd positive integer in \eqref{franel2n}, by the integrand's asymmetry about $x=\frac12$, $I_n=0$.

The Bernoulli polynomials $B_n(x)$, $n\geq0$, are defined by
\begin{equation}\label{bernoullipoly}
\df{te^{xt}}{e^t-1}=\sum_{n=0}^{\infty}B_n(x)\df{t^n}{n!}, \quad |t|<2\pi.
\end{equation}
In particular, $B_1(x)=x-\frac12$.
The Bernoulli functions $\tilde{B}_n(x)$ are consequently defined by
\begin{equation}\label{bernoullifunction}
 \tilde{B}_n(x) = B_n(x-\lfloor x \rfloor),\quad n\geq0.
 \end{equation}
  The function $((x))$ in \eqref{B1} is thus the first Bernoulli function $B_1(x-\lfloor x \rfloor) , x \notin \mathbb{Z}$.
 The $n^{th}$ Bernoulli number $B_n$ is defined by
\begin{equation}\label{bernoullinumber}
B_n=B_n(0), \quad n\geq 0.
\end{equation}

The next natural question to ask is: Do the theorems arising from McIntosh's conjecture for \eqref{franel4} and the more general integral \eqref{franel2n} have generalizations in which $((x))$ is replaced by $\tilde{B}_n(x)$, $n\geq0$? Indeed, for Bernoulli functions of odd index, in Theorem \ref{theorem_higher_bernoulli} below, we prove such a theorem for
\begin{equation}
	I_{2k+1}(a_1,a_2,\dots, a_{2n}) := \int _0^1 \tilde{B}_{2k+1}(a_1x) \tilde{B}_{2k+1}(a_2x) \cdots \tilde{B}_{2k+1}(a_{2n}x) \, dx.
\end{equation}
which is closely related to the reciprocal sum of $(2k+1)$-th powers by \eqref{higher_eq}:
\begin{equation}
	L_{2k+1}(u_1u_2\cdots u_{2n}) =\sum_{\substack{u_i\in \mathbb{Z}\setminus\{0\}\\ \sum_{i=0}^{2n}a_iu_i=0}}\frac{1}{(u_1u_2\cdots u_{2n})^{2k+1}}=(-1) ^n\left[\frac{(2\pi)^{2k+1}}{(2k+1)!}\right]^{2n}
	I_{2k+1}
\end{equation}
and the reciprocal sum defined by linear forms; see Corollary \ref{reciprocal_sum}.

\section{Proof of McIntosh's Conjecture}
\begin{theorem}\label{mcintosh}
	Let
$$I(a,b,c,e):=\int _0 ^1 ((ax))((bx))
	((cx)) ((ex))\,dx.$$
Then,
$$f(a,b,c,e) :=\frac{240 a^3 b^3c^3e^3 (a,b,c)(a,b,e)(a,c,e)(b,c,e) }{(a,b)^2(a,c)^2 (a,e)^2 (b,c)^2 (b,e)^2(c,e)^2(a,b,c,e)^4} I(a,b,c,e)$$ is an integer for any  positive integers $a,b,c,e$.
\end{theorem}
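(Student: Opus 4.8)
The plan is to compute $I(a,b,c,e)$ explicitly via the Fourier expansion of the sawtooth function. Recall that $((x))$ has the Fourier series $((x)) = -\frac{1}{\pi}\sum_{m=1}^\infty \frac{\sin(2\pi m x)}{m}$, so that $((ax))$ expands with frequencies that are multiples of $a$. Substituting the four Fourier series into the integral and using the orthogonality relation $\int_0^1 \sin(2\pi j x)\sin(2\pi l x)\,dx = \tfrac12\delta_{j,l}$ (for positive $j,l$) collapses the four-fold sum. However, since we have a product of four sines rather than two, I would instead use the product-to-sum identities to reduce $\sin\sin\sin\sin$ to a combination of cosines, and then integrate. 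The upshot is that $I(a,b,c,e)$ becomes a lattice sum over quadruples of positive integers $(i,j,k,l)$ with $ai \pm bj \pm ck \pm el = 0$, weighted by $1/(ijkl)$; this is precisely the reciprocal sum $L(a,b,c,e)$ appearing in the introduction, up to the factor $16\pi^4$.

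The key reduction is therefore to evaluate such constrained reciprocal sums in closed form. First I would exploit multiplicativity and scaling: writing $g=(a,b,c,e)$ and dividing through, one reduces to the case $(a,b,c,e)=1$, since $I(ga',gb',gc',ge') = \tfrac1g I(a',b',c',e')$ by the substitution $x\mapsto x/g$, and the normalizing factor $f$ is designed to absorb exactly these gcd powers. Next, I would pursue a partial-fractions / contour-integration evaluation of $L(a,b,c,e)$: the linear constraint $as+bt+cu+ev=0$ defines a rank-three lattice, and the sum $\sum 1/(stuv)$ over nonzero lattice points can be attacked by fixing one variable (say $v$) determined by the others modulo the gcd relations, then summing geometric-type series that produce cotangent and $\pi\cot(\pi z)$ kernels. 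Each such summation yields values of $\tilde B_1$ and $\tilde B_3$ at rational points, i.e. generalized Dedekind sums, and the final answer assembles into a rational function of $a,b,c,e$ and their pairwise and triple gcds times a polynomial in Bernoulli-type data.

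The heart of the matter, and where I expect the main obstacle, is showing that after all gcd factors are cleared the remaining quantity is not merely rational but an \emph{integer}. The explicit formula for $I(a,b,c,e)$ will naturally come out as a sum of terms each of which is a generalized Dedekind sum divided by the various gcds; the denominators that survive in $f(a,b,c,e)$ are controlled by the constant $240$, which I expect arises as $\mathrm{lcm}$ of small denominators like $12$, $30$, and $720$ that appear from $B_2$, $B_4$, and products of Bernoulli numbers in the cotangent expansions. To prove integrality I would isolate the prime-by-prime $p$-adic valuations: for each prime $p$, track the power of $p$ dividing each gcd and each of $a,b,c,e$, and verify by a von Staudt--Clausen type argument together with the structure of Dedekind sums that the $p$-adic valuation of $240\,f(a,b,c,e)$ is nonnegative. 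Establishing this uniformly, especially handling the small primes $2,3,5$ where the factor $240=2^4\cdot3\cdot5$ is doing real work, is the delicate step; the gcd-homogeneous normalization in the definition of $f$ must be shown to exactly cancel the denominators coming from the Dedekind-sum reciprocity relations.
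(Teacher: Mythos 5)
Your proposal is a research plan rather than a proof: its two central steps are both left as statements of intent. Step one, the closed-form evaluation of the constrained reciprocal sum $L(a,b,c,e)$ via cotangent kernels and generalized Dedekind sums, is exactly the route McIntosh himself pursued --- he obtained such Dedekind-sum representations in special cases and still declared the integrality conjecture ``out of reach.'' You give no indication of how the assembled formula would have its gcd structure and denominators controlled, which is the entire difficulty. Step two, the prime-by-prime $p$-adic verification ``by a von Staudt--Clausen type argument together with the structure of Dedekind sums,'' is precisely the delicate point you acknowledge but do not carry out. Nothing in the proposal would compile into an argument without first solving the problem that blocked McIntosh. There is also a concrete error in the reduction: by periodicity one has $I(ga',gb',gc',ge')=I(a',b',c',e')$, not $\tfrac1g I(a',b',c',e')$ (substitute $u=gx$ and use that the integrand has period $1/g$); this happens to be harmless because the normalizing prefactor in $f$ is itself invariant under common scaling, but it shows the bookkeeping behind your ``the factor $f$ absorbs exactly these gcd powers'' has not been checked.

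The paper's proof avoids any closed-form evaluation of $I(a,b,c,e)$, and this is what makes it work. It first reduces, via Theorem 1(iii) of McIntosh's paper, to the much stronger hypothesis that $a,b,c,e$ are \emph{triplet-wise} coprime (not merely $(a,b,c,e)=1$), which collapses the prefactor to $M=240\,a^3b^3c^3e^3/\prod(a,b)^2\cdots(c,e)^2$. It then writes $I$ as a sum of integrals of explicit degree-4 polynomials over the subintervals cut out by the discontinuities, so that $I$ is a sum of antiderivatives $F_j$ evaluated at rationals whose denominators divide one of $a,b,c,e$. Setting $L=\operatorname{lcm}(a,b,c,e)$, one shows monomial-by-monomial that $LM\,I$ is an integer (each denominator $a^m$ arising from evaluation at $1/a$ is cancelled by factors $a\cdot\frac{a}{(a,b)}\frac{a}{(a,c)}\frac{a}{(a,e)}$ dividing $LM$), and then --- the key trick --- that this integer is divisible by each of $a,b,c,e$ separately, using coprimality of the pairwise gcds and, for the final stubborn term, the elementary power-sum formula $\sum_{k<a}k^4$ in terms of $B_5(a)$. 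Divisibility by each of $a,b,c,e$ gives divisibility by $L$, hence $f=M\,I=(LM\,I)/L$ is an integer. So the integrality is extracted directly from the piecewise-polynomial structure and gcd arithmetic, with no lattice-sum evaluation, no Dedekind-sum reciprocity, and no $p$-adic analysis of Bernoulli numbers; if you want to salvage your approach you would need to supply, at minimum, the explicit Dedekind-sum formula for general $a,b,c,e$ together with a denominator bound for it, neither of which is currently known.
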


\begin{proof}
	By Theorem 1 (iii) in \cite{mcintosh}, it suffices to prove Theorem \ref{mcintosh} in the case $a,b, c,e$ are triplet-wise relatively prime, that is, $(a,b,c)= (a,b,e)= (a,c,e)= (b,c,e) =1$.
	Then the claim in the theorem reduces to showing that
$$f(a,b,c,e) =\frac{240 a^3 b^3c^3e^3  }{(a,b)^2(a,c)^2 (a,e)^2 (b,c)^2 (b,e)^2(c,e)^2} I(a,b,c,e)$$
is an integer. We first prove that $$\operatorname{lcm}(a,b,c,e) f(a,b,c,e) = \frac{240 a^4 b^4c^4e^4  }{(a,b)^3(a,c)^3 (a,e)^3 (b,c)^3 (b,e)^3(c,e)^3} I(a,b,c,e)$$ is an integer. (Throughout the sequel, $\operatorname{lcm}$ denotes the least common multiple.) Then we will show that this integer is divisible by each of $a,b,c$ and $e$ separately, and hence divisible by $\operatorname{lcm}(a,b,c,e) $.

	It will be convenient to use the notations  $$L:=\mathrm{lcm}(a,b,c,e)$$ and $$M:= \frac{240 a^3 b^3c^3e^3  }{(a,b)^2(a,c)^2 (a,e)^2 (b,c)^2 (b,e)^2(c,e)^2}. $$
	
	To proceed, observe that the integrand has discontinuities at the  points $1/a, 2/a, \dots , 1/b, 2/b,\dots ,$
	$ 1/c,2/c, \dots,1/e,2/e, \dots$. Denote by $X$ this finite set of points. Next, arrange the elements of $X$ in increasing order and denote them by $x_1, x_2,\dots ,x_n$.   These points break the interval $\left[0,1\right]$ into subintervals.   Accordingly, the integral $I(a,b,c,e)$ can be written as a sum of integrals $I_j$ integrated over the intervals $(x_j,x_{j+1})$. Between any two consecutive points $x_j$ and $x_{j+1}$, the integrand of $I_j$ is given by  a polynomial of degree $4$, denoted by $P_j(x)$, which has the following form:
	\begin{align*}
		P_j(x)=&\, (ax-n_{j1} -1/2 )(bx-n_{j2} -1/2 )(cx-n_{j3} -1/2 )(ex-n_{j4} -1/2 )\\
		=&\,abce x^4  -\left[(abc)(n_{j4}+1/2)+(abe)(n_{j3}+1/2)+(ace)(n_{j2}+1/2)+(bce)(n_{j1}+1/2)\right]x^3\\&+ \left[(ab)(n_{j3} +1/2) (n_{j4}+1/2)+\dots \right]x^2 -\left[a(n_{j2}+1/2 ) (n_{j3} +1/2)(n_{j4} +1/2)+\dots \right] x\\
		&+ (n_{j1 } +1/2) (n_{j2} +1/2)(n_{j3} +1/2)(n_{j4} +1/2),
	\end{align*}
	where $n_{j1} ,  n_{j2}, n_{j3}, n_{j4}$ are  positive integers. Next, integrate each $P_j(x) $  over the corresponding subinterval. Denote by $F_j(x) $ an antiderivative of $P_j(x)$. Then,
	\begin{align}\label{fj}
		F_j(x)
		=&\frac{1}{5}abce x^5  -\frac{1}{4}\left[(abc)(n_{j4}+1/2)+(abe)(n_{j3}+1/2)+(ace)(n_{j2}+1/2)+(bce)(n_{j1}+1/2)\right]x^4\nonumber\\&- \frac{1}{3}\left[(ab)(n_{j3} +1/2) (n_{j4}+1/2)+\dots \right]x^3 +\frac{1}{2}\left[a(n_{j2}+1/2 ) (n_{j3} +1/2)(n_{j4} +1/2)+\dots \right] x^2\\
		\nonumber&+ (n_{j1 } +1/2) (n_{j2} +1/2)(n_{j3} +1/2)(n_{j4} +1/2)x.
	\end{align}

 Next, we compute $I_j$ by evaluating $F_j(x)$ at the two endpoints $x_j$ and $x_{j+1}$. Recall that $I(a,b,c,e)$ is the sum of all the integrals $I_j$'s.

 Moreover, the coefficient of $x^k$ in \ref{fj} multiplied by $k\cdot2^{5-k}$, $1\leq k\leq 5$,  is a polynomial in $a,b,c,e$ with integer coefficients. Note that
 $240$ is divisible by each of $k\cdot2^{5-k}$, $1\leq k\leq 5$. So after multiplying by $240$, we have $240 F_j(s)$ is a sum of
 integer multiples of  $a^ib^jc^ke^lx^m$ for $i+j+k+l=m-1$, $0\leq i,j,k,l\leq 1$.

Theorem \ref{mcintosh} will then follow from the two lemmas below.

	\begin{lemma}
		$LM\, I(a,b,c,e)=L\,f(a,b,c,e)$ is an integer.
	\end{lemma}

	\begin{proof}
		We want to show that $LM\, F_j(x)$ evaluated at $x_j$ and $x_{j+1}$ is an integer.
		Since each of the endpoints is an integer multiple of either $\frac{1}{a}$, $\frac{1}{b}$, $\frac{1}{c}$ or $\frac{1}{e}$, by symmetry it suffices to show that $LM\,F_j(x)$ evaluated at $\frac{1}{a}$ is an integer. For each degree, $1\leq m\leq 5$, in the representation of $F_j(x)$, view  the coefficient of $x^m$ as a polynomial in $a,b,c,e$. Consider a monomial of the form $a^ib^jc^ke^l$ with degree $m-1$, i.e., $i+j+k+l=m-1$ in the coefficient of $x^m$, where $0\leq i,j,k,l\leq 1$.  Observe that $\frac{a^ib^jc^ke^l}{a^{m}}$ is an integer multiple of
$$\frac{1}{a\left[\frac{a}{(a,b)}\right]^j\left[\frac{a}{(a,c)}\right]^k\left[\frac{a}{(a,e)}\right]^l}$$
 whose denominator divides $LM$. This is because  $$LM=240\,\mathrm{lcm}(a,b,c,e) \frac{ a}{(a,b)}\frac{b}{(a,b)}\cdots \frac{c}{(c,e)}\frac{e}{(c,e)}$$ which is divisible by $$a\frac{a}{(a,b)}\frac{ a}{(a,c)}\frac{ a}{(a,e)}.$$
	
		Since $LMF_j(\frac{1}{a})$ is a sum of terms of the form  $LM\frac{a^ib^jc^ke^l}{a^m}$ with $i+j+k+l=m-1$, $0\leq i,j,k\leq 1$ which we have shown to be integers,
		 we conclude that   $LM\, I(a,b,c,e)=L\,f(a,b,c,e)$ is an integer.
	\end{proof}
	
	\begin{lemma}\label{lemma2.2}
		$LM\, I(a,b,c,e)=L\,f(a,b,c,e)$ is divisible by each of $a,b,c$ and $e$.
	\end{lemma}
	\begin{proof}
		Without loss of generality, we show that $L \,I(a,b,c,e)$ is divisible by $a$. It suffices to show that $LM\,F_j(x) $ evaluated at $\frac{1}{a}, \frac{1}{b}, \frac{1}{c}$ and $\frac{1}{e}$ is divisible by $a$. We first show that $LM\,F_j(x)$ evaluated at $\frac{1}{b}, \frac{1}{c}$ and $\frac{1}{e}$ is divisible by $a$. Without loss of generality, it suffices to show that $LM\,F_j(x)$ evaluated at $\frac{1}{b}$ is divisible by $a$. As before, it suffices to consider a monomial of the form $a^ib^jc^ke^l$ with degree $m-1$ in the coefficient of $x^m$, $0\leq i,j,k,l\leq 1$.  Evaluated at $\frac{1}{b }$, we see that $\frac{a^ib^jc^ke^l}{b^m}$ is an integer multiple of $$\frac{1}{b\left[\frac{b}{(b,a)}\right]^i\left[\frac{b}{(b,c)}\right]^j\left[\frac{b}{(b,e)}\right]^l}.$$
 Multiplying this by $LM$, we find that $LM\,\frac{a^ib^jc^ke^l}{b^m}$ is an integer multiple of $$\frac{a}{(a,b)}\frac{a}{(a,c)}.$$  As $((a,b),(a,c))=1$, $\frac{a}{(a,b)}\frac{a}{(a,c)}$ is divisible by $a$. Hence, $LM\,\frac{a^ib^jc^ke^l}{b^m}$ is divisible by $a$.
		
		Next, we show that $LMF_j(x)$ evaluated at $\frac{1}{a}$ is divisible by $a$. Consider a monomial  $a^ib^jc^ke^l$   in the coefficient of $x^m$  in $F_j(x)$ with degree $1\leq m\leq 4$ and $i+j+k+l=m-1$,  $0\leq i,j,k,l\leq 1$. If $i=1$,  then
 $$		\frac{a^ib^jc^ke^l}{a^m} =\frac{b^ic^ke^l}{a^{m-1}} $$
  is an integer multiple of
  $$\frac{1}{a\left[\frac{a}{(a,b)}\right]^j\left[\frac{a}{(a,c)}\right]^k\left[\frac{a}{(a,e)}\right]^l}.$$
  		Since $j+k+l=m-2\leq 2$, one of $j,k,l$ is $0$. Without loss of generality, assume $l=0$. Then multiplying by $LM$, we see that $LM\,\frac{a^ib^jc^ke^l}{a^m}$ is an integer multiple of  $$\frac{a}{(a,e)}\frac{e}{(e,b)},$$
		which is divisible by $a$ since $(a,e)$ and $(e,b)$ are coprime.
		
		If
		$m\leq 3$ and $i=0$, then, since $j+k+l=m-1\leq 2$, one of $j,k,l$ is $0$. We may assume $l=0$. Then $
\frac{b^jc^k}{a^m} $ is an integer multiple of $$\frac{1}{a\left[ \frac{a}{(a,b)}\right]^j\left[\frac{a}{(a,c)}\right]^k}.$$ Then multiplying by $LM$, we conclude that $LM\,\frac{a^ib^jc^ke^l}{a^m}$ is also an integer multiple of  $$\frac{a}{(a,e)}\frac{e}{(e,b)},$$
which is divisible by $a$ since $(a,e)$ and $(e,b)$ are coprime.
		
		Therefore, viewing the coefficients of $x^k$ in $F_j(x)$ as polynomials in $a,b,c$ and $e$,  since adding or subtracting terms divisible by $a$ does not change divisibility of $a$, the following terms can be  removed from further consideration in $F_j(x)$:  the terms involving the monomials $ab^jc^ke^l$ with $j+k+l=2$ in the coefficient of $x^4$, and of the form $a^ib^jc^ke^l$ with $i+j+k+l=m-1$ in the coefficient of $x^m$ with $m\leq 3$. This implies that we can ignore $n_{j2} ,  n_{j3} , n_{j4}$ in $F_j$, since adding or subtracting terms divisible by $a$ does not change the divisibility of $a$. Furthermore, since $2^4\mid 240$, we can remove $1/2$ in $F_j$ as well for the same reason.  Now it remains  to show that
$$LM\int_0^1 ((ax))bxcxex\, dx$$
 is divisible by $a$. Evaluating the integral,
		we have
		\begin{align}
			\int_0^1 ((ax))bxcxex\, dx&=
			\sum_{k=0} ^{a-1} \int _{\frac{k}{a}} ^{\frac{k+1}{a}} (ax-k-1/2) bxcxex\, dx\nonumber \\\label{integral1}
			&=\sum_{k=0} ^{a-1} \int _{\frac{k}{a}} ^{\frac{k+1}{a}} -bcekx^3\, dx +\int_0^1bce(ax-1/2) x^3\,dx.
		\end{align}
		It suffices to consider the first term on the right-hand side of \eqref{integral1}, since the second term multiplied by $LM$ is divisible by $a$.  We have
		\begin{align}
			\sum_{k=0} ^{a-1} \int _{\frac{k}{a}} ^{\frac{k+1}{a}} -bcekx^3\, dx  &=  \frac{ bce}{4}\left[\frac{1^4+2^4+\dots +(a-1)^4}{a^4}-(a-1)\right]\nonumber\\&=\frac{bce}{4a^4}\cdot \frac{a(a-1) \left[6(a-1)^3+9(a-1)^2+a-2\right]}{30} -\frac{bce(a-1)}{4}.\label{eqsum1}
		\end{align}
	 We can ignore the second term  $-\frac{bce(a-1)}{4}$ in \eqref{eqsum1} since $4\mid 240$. The first term is an integer multiple of $\frac{bce}{120a^3}$ and hence a multiple of
$$\frac{1}{120\frac{a}{(a,b)}\frac{a}{(a,c)}\frac{a}{(a,e)}}, $$ whose denominator divides $M$. Multiplying by $LM$, since $a_1|L$, we conclude that $$\frac{LM}{120\frac{a}{(a,b)}\frac{a}{(a,c)}\frac{a}{(a,e)}} $$ is   divisible by ${a}$.  This concludes the proof of Lemma \ref{lemma2.2} and therefore also that of Theorem \ref{mcintosh}.
	\end{proof}
\end{proof}

	Note that the constant in McIntosh's conjecture is sharp, as seen  from the following example.
	\begin{example}\label{example_mcintosh} We have
		 $$I(a,1,1,1)  = \frac{5a-2}{240a^3},$$
 and the constant in McIntosh's conjecture is $240a^3$. When $a=3k$, where $k$ an odd integer, the denominator of $I(a,1,1,1)$ is exactly $240a^3$.
	\end{example}

\section{Generalization of McIntosh's Conjecture}
More generally, for $n=2k$, define $$
I_n=I(a_1,\dots,a_n)=\int_0^1((a_1x))((a_2x) ) \cdots ((a_nx)) dx.  $$

\begin{defi}
	Let $f\in \mathbb{Q}[x]$ be a rational polynomial, \textbf{the denominator of $f$} is the smallest positive integer $N\in \mathbb{Z}_{>0}$ such that $Nf\in \mathbb{Z}[x]$.
\end{defi}

Recall that the Bernoulli polynomials are defined in \eqref{bernoullipoly}. See  \cite[Theorem 4]{power-sum} for an expression of the denominator of the Bernoulli polynomials.

\begin{thm}\label{mcintoshgen}
	Let $n=2k$, and let $B$ be the denominator of the  polynomial $B_{n+1}(x)$ [Theorem 4, \cite{power-sum}]. Then
$$	f_n:=  \frac{\mathrm{lcm}(2^{2k}\prod_{j=1}^k(2j+1), nB)
		 X_1^{2k-1}X_3^{2k-3}\cdots  X_{2k-1}}{X_2^2X_4^4\cdots  X_{2k}^{2k}}I_{2k}$$
	is an integer, where $$X_m= \prod_{1\leq i_1<i_2<\dots < i_m\leq n=2k}(a_{i_1},a_{i_2},\dots ,  a_{i_m})$$ is the product of all greatest common divisors of $a_{i_1},a_{i_2},\dots ,  a_{i_m}$, with $1\leq i_1<i_2<\dots < i_m\leq n=2k$.
\end{thm}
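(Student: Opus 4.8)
The plan is to mirror the proof of Theorem \ref{mcintosh}, replacing the four–variable bookkeeping with its $n=2k$ analogue and concentrating the arithmetic into a single prime-by-prime valuation estimate. Since $f_n$ is rational, it suffices to show $v_p(f_n)\ge 0$ for every prime $p$, where $v_p$ denotes the $p$-adic valuation. As before I would record the discontinuities $\{t/a_i\}\cap(0,1)$, list them as $x_1<x_2<\cdots$, and write $I_{2k}=\sum_j\big(F_j(x_{j+1})-F_j(x_j)\big)$, where on each subinterval the integrand is the degree-$n$ polynomial $P_j(x)=\prod_{i=1}^{n}\big(a_ix-n_{ji}-\tfrac12\big)$ and $F_j$ is its antiderivative, so that the coefficient of $x^{m+1}$ in $F_j$ equals $\frac{1}{m+1}$ times the coefficient of $x^m$ in $P_j$. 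Each endpoint has the form $t/a_i$, so $I_{2k}$ is a sum of rationals of the shape $\frac{1}{m+1}\cdot\frac{\prod_{s\in S}a_s}{a_i^{\,m+1}}\cdot(\text{a product of }n-m\text{ factors }n_{jr}+\tfrac12)$ with $|S|=m$.

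The first task is to clear all denominators not involving the $a_i$ using the constant $C_0:=\operatorname{lcm}\big(2^{2k}\prod_{j=1}^{k}(2j+1),\,nB\big)$. The $n-m$ factors $n_{jr}+\tfrac12$ contribute at most $2^{\,n-m}$ to the denominator, and together with the $\frac{1}{m+1}$ from integration this shows that $(m+1)2^{\,n-m}$ times the coefficient of $x^{m+1}$ in $F_j$ is an integer polynomial in $a_1,\dots,a_n$. A short $2$-adic check gives $\max_{0\le m\le 2k}\big(v_2(m+1)+2k-m\big)=2k$, so the power of two needed is exactly $2^{2k}$, while the odd parts of the integers $m+1\le 2k+1$ all divide $\prod_{j=1}^{k}(2j+1)$. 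The remaining non-$a_i$ denominators arise, exactly as in \eqref{eqsum1}, from the Faulhaber sums $\sum_{t=0}^{a_i-1}t^{\,n}$ that appear once the $a_i$-divisible terms are discarded; by $\sum_{t=0}^{a_i-1}t^{\,n}=\frac{1}{n+1}\big(B_{n+1}(a_i)-B_{n+1}\big)$ and the definition of $B$ as the denominator of $B_{n+1}(x)$ from \eqref{bernoullipoly}, these denominators divide $nB$. Thus $C_0$ absorbs the entire numerical part of the denominator.

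The heart of the argument, and the step I expect to be hardest, is to show that the gcd ratio $R:=\prod_{m\ \mathrm{odd}}X_m^{\,2k-m}\big/\prod_{m\ \mathrm{even}}X_m^{\,m}$ clears the surviving $a_i$-and-gcd denominators. I would argue locally: fix $p$, put $\alpha_i=v_p(a_i)$, and reorder so $\alpha_1\le\cdots\le\alpha_n$; since the valuation of an $m$-fold gcd is the least $\alpha$ among its indices,
\[
v_p(X_m)=\sum_{i=1}^{n-m+1}\binom{n-i}{m-1}\alpha_i .
\]
Hence $v_p(R)=\sum_{i=1}^{n}c_i\,\alpha_i$, where $c_i=\sum_{m\ \mathrm{odd}}(2k-m)\binom{n-i}{m-1}-\sum_{m\ \mathrm{even}}m\binom{n-i}{m-1}$. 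A first sub-result is that $R$ is itself a positive integer: by Abel summation on the nondecreasing sequence $\alpha_i\ge 0$ this follows from the binomial tail inequalities $\sum_{j\ge i}c_j\ge 0$ for all $i$ (with equality in $\sum_{j\ge 1}c_j=0$, the fully balanced case). The genuine difficulty is matching valuations: the top term $\frac{a_1\cdots a_n}{n+1}x^{n+1}$ naively produces $a_i^{\,n}$ in the denominator, and it is precisely the Faulhaber summation over the block $\{t/a_i\}$ that lowers this by one power (while creating the $nB$ denominator of the previous paragraph); after this reduction one must prove $v_p(C_0R)\ge v_p(\text{denominator of }I_{2k})$ uniformly, which amounts to a family of weighted binomial inequalities in the ordered $\alpha_i$, indexed by $p$, by $m$, and by the subset $S$. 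Verifying that the exponents $2k-m$ (odd $m$) and $m$ (even $m$) in $R$ are exactly large enough, with no slack to spare, is the main obstacle.

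Finally I would organize the conclusion as in Lemma \ref{lemma2.2}: set $L:=\operatorname{lcm}(a_1,\dots,a_n)$, show $L f_{2k}\in\mathbb{Z}$ by the estimates above, and then show $L f_{2k}$ is divisible by each $a_i$ separately—hence by $L$—so that $f_{2k}=Lf_{2k}/L\in\mathbb{Z}$. The divisibility by $a_i$ is again local, corresponding to one extra unit in the coefficient of the relevant $\alpha_i$ in the tail inequality; in the four-variable case this is exactly the coprimality statements such as $((a,b),(a,c))=1$ used in the proof of Theorem \ref{mcintosh}, and in general it becomes the assertion that the factor $a_i/(a_i,a_s)$ furnished by $R$ is coprime to its companions, forcing the required power of $a_i$. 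The equality case of the tail inequalities—where all $v_p(a_i)$ agree—explains why $C_0$ and the exponents in $R$ cannot be reduced, i.e.\ the sharpness illustrated for $n=4$ in Example \ref{example_mcintosh}.
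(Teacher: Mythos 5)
Your framework coincides with the paper's: the same subdivision of $[0,1]$ at the discontinuities, the same antiderivatives $F_j$, the same constant $\mathrm{lcm}(2^{2k}\prod_{j=1}^k(2j+1),\,nB)$ absorbing the purely numerical denominators (your $2$-adic check is the paper's observation that $m2^{n+1-m}$ divides $2^n\prod_{j=1}^k(2j+1)$), the same appeal to Faulhaber sums, and the same endgame of proving $Lf_n\in\mathbb{Z}$ and then $a_i\mid Lf_n$ for each $i$. The problem is that the central step is not proved: you reduce everything to the assertion that $v_p(C_0R)\ge v_p(\text{denominator of }I_{2k})$ ``uniformly,'' describe it as ``a family of weighted binomial inequalities in the ordered $\alpha_i$, indexed by $p$, by $m$, and by the subset $S$,'' and then declare verifying it to be ``the main obstacle.'' That family of inequalities \emph{is} the theorem; leaving it unverified means the proposal is a plan, not a proof. (Your formula $v_p(X_m)=\sum_{i=1}^{n-m+1}\binom{n-i}{m-1}\alpha_i$ and the balance $\sum_i c_i=0$ are correct, but neither is the hard part.)

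The gap is substantive, not routine, for two reasons. First, divisibility of $Lf_n$ by $a_1$ is a statement about the \emph{numerator}, not a bound on denominators: one must exhibit a factor $a_1$ inside $LM\,F_j(1/a_m)$. The paper does this by extracting from $M$ the explicit telescoping product \eqref{factor}, namely
$$\frac{a_1}{(a_1,a_2)}\frac{a_1}{(a_1,a_3)}\,\frac{(a_1,a_2,a_3)}{(a_2,a_3,a_4,a_5)}\cdots\frac{(a_1,\dots,a_{2k-1})}{(a_2,\dots,a_{2k})},$$
in which the higher gcd factors $X_3,X_5,\dots$ and $X_4,X_6,\dots$ of $M$ enter, and then proving coprimality facts such as $\left((a_1,a_2),X\right)=1$ to force $a_1$ to divide this product; these coprimality facts in turn require the normalization $(a_1,\dots,a_n)=1$, obtained from Proposition \ref{prop_gcd}, a reduction your sketch omits. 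Nothing in a single-prime ordering of the $\alpha_i$ produces this factor. Second, for the one term where no gcd factor can be gained (the monomial $a_2\cdots a_nx^n$ evaluated at $1/a_1$), the needed power of $a_1$ comes from an exact identity rather than an inequality: by \eqref{sumofpowers}, $\sum_{t=1}^{a_1-1}t^n=B_{n+1}(a_1)/(n+1)$ is a polynomial in $a_1$ with zero constant term, hence contributes one more factor of $a_1$ than denominator-counting suggests. You mention this cancellation in passing, but your uniform valuation inequality would have to be formulated and proved \emph{after} performing this summation term by term — and organizing exactly that is the content of Lemmas \ref{lemma2} and \ref{lemma3} of the paper, which your proposal replaces with an unproved assertion.
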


\begin{example}
	When $k=1$, the theorem states that $$12
	\frac{ab}{(a,b)^2}I_2(a,b)$$ is an integer, in agreement with \eqref{franel}.
\end{example}
\begin{example}
		When $k=2$,  the theorem reduces to McIntosh's conjecture, Theorem \ref{mcintosh}.

\end{example}

\begin{example}
When $k=3$, the theorem states that
  $$\frac{4032X_1^5X_3^3X_5}{X_2^2X_4^4X_6^6} I_6$$ is an integer,
where $$X_m= \prod_{1\leq i_1<i_2<\dots < i_m\leq 6}(a_{i_1},a_{i_2},\dots ,  a_{i_m})$$
 is the product of all greatest common divisors of $a_{i_1},a_{i_2},\dots ,  a_{i_m}$, with $1\leq i_1<i_2<\dots < i_m\leq 6$.

Explicitly,
\begin{align*}
	&X_1=a_1a_2\cdots a_6,\\
	&X_2=\prod_{1\leq i<j\leq 6} (a_i,a_j),\\
	&X_3=\prod_{1\leq i<j<k\leq 6} (a_i,a_j,a_k),\\&\dots\\
	&X_6=(a_1,a_2,\dots,a_6).
\end{align*}
\end{example}

\begin{proof}
	First, we reduce to the case that $(a_1,a_2\dots ,a_{n})=1$ by Proposition \ref{prop_gcd}. Let
$$L:=\mathrm{lcm}(a_1,\dots, a_n) $$ and
\begin{equation}\label{M}
M:=\frac{\mathrm{lcm}(2^{2k}\prod_{j=1}^k(2j+1), nB) X_1^{2k-1}X_3^{2k-3}\cdots  X_{2k-1}}{X_2^2X_4^4\cdots  X_{2k}^{2k}}.
\end{equation}
 Next, we show that $L\,f_n$ is an integer. Lastly, we show that $L\,f_n$ is divisible by each of $a_1,\dots ,a_n$ separately, and thereby conclude that $f_n$ is an integer.

	The integrand of $I_n$ has discontinuities at the points $  \frac{ 1}{a_k}, \frac{2}{a_k},\dots, \frac{a_k-1}{a_k}$, for $1\leq k\leq n $. Denote by $S$ this finite set of points. Order the elements in $S$ by $x_1,x_2,\dots, x_N$.  These points break the interval $\left[0,1\right]$ into sub-intervals, and so the integral $I_n$ is represented as a sum of integrals over these sub-intervals. Let $S=\{1,2,\dots, n\}$ be the index set. Between two consecutive points $x_j$ and $x_{j+1}$, the integrand is given by the polynomial $P_j(x)$ as follows:
	\begin{align*}
P_j(x)=&\left(\prod_{i=1} ^n a_i\right) x^{n} -\left[\sum_{k=1}^n\left(\prod_{\substack{i=1\\i\neq k}}^na_i\right)\left(n_{jk}+\frac{1}{2}\right)\right]x^{n-1} \notag\\ &+\left[\sum_{\substack{k\neq l\\\{k,l\}\subset S}}\prod_{\substack{i=1\\i\neq k,l}}^n a_i\left(n_{jk}+\frac{1}{2}\right)\left(n_{jl}+\frac{1}{2}\right)\right] x^{n-2} -\dots +\prod_{k=1}^n\left(n_{jk}+\frac{1}{2}\right),
\end{align*}
  where the positive integers $n_{jk}$ depend on the endpoints $x_j$. Let $F_j(x)$ denote an antiderivative of $P_j(x)$. Then $I_n$ is the sum of the antiderivatives $F_j(x)$ taken at the end points $x_{j+1}$ and $x_j$, $1\leq j\leq N$. More precisely,
	\begin{align*}
		F_j(x)=&\frac{1}{n+1}\prod_{i=1} ^n a_i x^{n+1} -\frac{1}{n}\left[\sum_{k=1}^n\prod_{\substack{i=1\\i\neq k}}^na_i\left(n_{jk}+\frac{1}{2}\right)\right]x^{n}\notag\\&+ \frac{1}{n-1}\left[\sum_{\substack{k\neq l\\\{k,l\}\subset S}}\prod_{\substack{i=1\\i\neq k,l}}^n a_i\left(n_{jk}+\frac{1}{2}\right)\left(n_{jl}+\frac{1}{2}\right)\right] x^{n-1} -\dots +\prod_{k=1}^n\left(n_{jk}+\frac{1}{2}\right)x.
	\end{align*}

	\begin{lemma}\label{lemma1}
		$L\,f_n=LM\,I_n$ is an integer.
	\end{lemma}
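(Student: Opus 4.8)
The plan is to mirror, at the level of a single sub\nobreakdash-interval, the two\nobreakdash-step strategy already used for $n=4$: first clear the ``analytic'' denominators (the $\tfrac1m$ from integration and the powers of $\tfrac12$ coming from the sawtooth), and then handle the arithmetic of the greatest common divisors encoded in $M$. Since $I_n=\sum_{j}\bigl(F_j(x_{j+1})-F_j(x_j)\bigr)$ and $L,M$ are symmetric in $a_1,\dots,a_n$, it suffices to prove that $LM\,F_j$ is an integer at each endpoint; every endpoint has the form $p/a_i$, and because $F_j(p/a_i)$ arises from $F_j(1/a_i)$ by multiplying the coefficient of $x^m$ by the integer $p^m$, I may assume the endpoint is $1/a$ with $a:=a_1$. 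Throughout I work termwise in the expansion of the coefficients of $F_j$.

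First I would record the shape of the coefficients. The coefficient of $x^m$ in $F_j$ is $\tfrac1m$ times the coefficient of $x^{m-1}$ in $\prod_{i=1}^n\bigl(a_ix-(n_{ji}+\tfrac12)\bigr)$, i.e. a sum of terms $\tfrac1m\prod_{i\in T}a_i\prod_{i\notin T}\bigl(-(n_{ji}+\tfrac12)\bigr)$ with $|T|=m-1$. Expanding the half\nobreakdash-integer factors produces atoms $\tfrac{1}{m\,2^{q}}\,(\text{integer})\prod_{i\in T}a_i$ with $0\le q\le n+1-m$. The key observation is that the constant $2^{2k}\prod_{j=1}^{k}(2j+1)$, which divides $\operatorname{lcm}\bigl(2^{2k}\prod_{j=1}^k(2j+1),\,nB\bigr)$ appearing in $M$, is divisible by $m\,2^{\,2k+1-m}$ for every $1\le m\le 2k+1$: the $2$\nobreakdash-part follows from $2^{v_2(m)}\ge v_2(m)+1$ (so the exponent $v_2(m)+2k+1-m$ never exceeds $2k$), and the odd part of $m$ is an odd integer $\le 2k+1$, hence one of the factors $3,5,\dots,2k+1$. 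Consequently multiplication by this constant clears both $\tfrac1m$ and every $\tfrac{1}{2^{q}}$, and the problem collapses to proving that
\[
LM\cdot\frac{\prod_{i\in T}a_i}{a^{m}}\in\mathbb{Z}\qquad\bigl(|T|=m-1,\ 1\le m\le n+1\bigr).
\]

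Next I would isolate the gcd content. Splitting on whether $1\in T$, the binding case is $1\notin T$, where $\tfrac{\prod_{i\in T}a_i}{a^{m}}=\tfrac1a\prod_{i\in T}\tfrac{a_i}{a}$ equals the integer $\prod_{i\in T}\tfrac{a_i}{(a,a_i)}$ times $\bigl(a\prod_{i\in T}\tfrac{a}{(a,a_i)}\bigr)^{-1}$; the case $1\in T$ reduces to a smaller $T$ and is no harder. Hence it suffices to show the divisibility
\[
a\prod_{i\in T}\frac{a}{(a,a_i)}\ \Big|\ LM ,
\]
whose most demanding instance is $T=\{2,\dots,n\}$, giving $a^{n}/\prod_{i=2}^{n}(a,a_i)$. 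I would prove this prime by prime.

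The hard part will be this local divisibility. Fix a prime $p$, put $\alpha_i=v_p(a_i)$ and relabel so that $\alpha_1\ge\cdots\ge\alpha_n$; since enlarging $v_p(a)$ only increases the left side, it suffices to treat $a$ of maximal valuation, so $v_p(L)=\alpha_1$. Using $v_p(X_m)=\sum_{t\ge m}\binom{t-1}{m-1}\alpha_t$ and the exponents $c_m=2k-m$ ($m$ odd) and $c_m=-m$ ($m$ even) of $X_m$ in $M$, the required inequality $v_p\bigl(a\prod_{i}\tfrac{a}{(a,a_i)}\bigr)\le v_p(L)+v_p(M)$ reduces, after dropping the harmless nonnegative contribution of the constant and collecting the coefficient $d_t:=\sum_{m\le t}c_m\binom{t-1}{m-1}$ of each $\alpha_t$, to
\[
\sum_{t=2}^{n}(d_t+1)\,\alpha_t\ \ge\ 0\qquad\text{for all }\alpha_2\ge\cdots\ge\alpha_n\ge0 .
\]
By Abel summation this is equivalent to the nonnegativity of every prefix sum $W_s=\sum_{t=2}^{s}(d_t+1)$, and I expect the binomial bookkeeping to collapse to the closed form $W_s=(2k-s)(2^{\,s-1}-1)$, which is manifestly $\ge0$ for $2\le s\le 2k$ (and vanishes at $s=2k$). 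It is worth stressing that the constant in $M$ plays no role in this inequality — it was spent entirely on clearing $\tfrac1m$ and $\tfrac12$ — so the whole arithmetic burden rests on the exponents of the $X_m$, and verifying that these exponents produce nonnegative prefix sums is precisely where the real work lies.
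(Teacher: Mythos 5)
Your proposal is correct, and its first three steps (reduction by symmetry to the endpoint $1/a_1$, clearing the denominators $m2^{n+1-m}$ against the constant $\mathrm{lcm}(2^{2k}\prod_{j=1}^k(2j+1),nB)$, and reduction to monomials $\prod_{i\in T}a_i/a_1^m$) coincide with the paper's proof. Where you genuinely diverge is the key divisibility $a_1\prod_{i\in T}\frac{a_1}{(a_1,a_i)}\mid LM$. The paper handles this globally and without any computation: it uses $a_1\mid L$ together with the factorization
$$\frac{X_1^{2k-1}}{X_2^2}=\prod_{\{i,j\}}\frac{a_ia_j}{(a_i,a_j)^2},$$
a product of \emph{integers} indexed by unordered pairs, so that the distinct factors $\frac{a_1}{(a_1,a_i)}$, $i\in T$, divide distinct integer factors, hence their product divides $\frac{X_1^{2k-1}}{X_2^2}$, which in turn divides $M$ by \eqref{factor_of_M}. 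You instead argue prime by prime: your valuation formula $v_p(X_m)=\sum_{t\ge m}\binom{t-1}{m-1}\alpha_t$ is right, the coefficient of $\alpha_1$ does cancel (since $d_1=2k-1=n-1$), and your ``expected'' closed form is in fact correct: one computes $d_t=2^{t-2}(2k-t-1)$ for $t\ge 2$ (the alternating part of $c_m$ contributes nothing for $t\ge2$), whence $W_s=(s-1)+\sum_{t=2}^s2^{t-2}(2k-t-1)=(2k-s)(2^{s-1}-1)\ge 0$ for $2\le s\le 2k$. So your argument closes once this bookkeeping is actually carried out rather than asserted. Comparing the two: the paper's pairing argument is shorter, but it leans on the unproved assertion \eqref{factor_of_M}, i.e.\ the integrality of $X_3^{2k-3}X_5^{2k-5}\cdots X_{2k-1}/(X_4^4X_6^6\cdots X_{2k}^{2k})$, which itself requires a counting argument (each gcd over $2j+2$ indices divides each of the $2j+2$ gcds over its $(2j+1)$-element subsets, and each $(2j+1)$-element subset lies in exactly $2k-2j-1$ supersets of size $2j+2$). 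Your valuation route needs no such auxiliary claim, verifies the exponents of \emph{all} the $X_m$ in $M$ simultaneously, and even exhibits their sharpness ($W_{2k}=0$); the price is that the entire burden sits on a binomial identity that must be proved, not merely expected.
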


	\begin{proof}
		Without loss of generality, it is sufficient to show that $LM\,F_j(x) $ evaluated at $\frac{1}{a_1}$ is an integer by symmetry (since every endpoint is an integer multiple of $\frac{1}{a_i}$ for some $i$).
		In $F_j(x)$, consider the coefficient of $x^m$ as a polynomial in $a_1,\dots, a_n$. The coefficients of this polynomial in $a_1,\dots, a_n$ have denominators $m2^{n+1-m}$. Since $m\leq 2^{m-1}$, we see that
$$m2^{n+1-m}\mid 2^n\prod_{j=1}^k(2j+1).$$
		Therefore, we only need to consider the monomials in $a_1,\dots,a_n$. Thus, it suffices to show that
\begin{equation}
LM\prod_{l=1}^{r-1}a_{i_l}/{a_1^r}
\end{equation}
is an integer for $1\leq r\leq n+1$, where the $i_l$ are distinct, $1\leq l \leq r-1$. The product above is an integer multiple of  $$\frac{LM}{a_1\prod_{l=1}^{r-1}\frac{a_1}{(a_1, a_{i_l})}}.$$
 Note that
 \begin{equation}\label{factor_of_M}
 	\frac{X_1^{2k-1}}{X_2^2}\bigg| M
 \end{equation}
  and
  \begin{equation}\label{factor1_of_M}
  	\frac{X_1^{2k-1}}{X_2^2}=\prod_{i\neq j}\frac{a_i}{(a_i,a_j)}\frac{a_j}{(a_i,a_j)}.
  \end{equation}
  Thus,
  $$\prod_{l=1}^{r-1}\frac{a_1}{(a_1, a_{i_l})}\left|\frac{X_1^{2k-1}}{X_2^2}.\right.$$
  Moreover, $a_1\mid L$, and so we have finished the proof.
	\end{proof}

	Next, we want to show that $L\,f_n$ is divisible by $a_i$, $1\leq i \leq n$. Again, without loss of generality, it suffices to show that $L\,f_n$ is divisible by $a_1$. We want to show that each $LM\,F_j(x)$ evaluated at the endpoints $x_j$ and $x_{j+1}$ is divisible by $a_1$. It suffices to show that $LM\,F_j(x)$, evaluated at $\frac{1}{a_i}$ is divisible by $a_1$, $1\leq i \leq n$, since each endpoint is an integer multiple of $\frac{1}{a_j}$ for some $1\leq j\leq n$.  We finish  the proof of the theorem by using the following two lemmas.

	\begin{lemma}\label{lemma2}
		Suppose $m\neq 1$. Then  $LM\,F_j(\frac{1}{a_m})$ is divisible by $a_1$.
	\end{lemma}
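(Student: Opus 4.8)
The plan is to follow the pattern of Lemma \ref{lemma1}, reducing to a statement about individual monomials, and then to locate inside $M$ a factor $a_1$ that is not consumed when $F_j$ is evaluated at $1/a_m$. First I would invoke the reduction already used in Lemma \ref{lemma1}: the coefficient of $x^r$ in $F_j(x)$, regarded as a polynomial in $a_1,\dots,a_n$, has denominator dividing $r\,2^{\,n+1-r}\mid 2^{n}\prod_{j=1}^{k}(2j+1)$, which divides $M$, so it is enough to treat one monomial at a time. Thus for a squarefree monomial $\prod_{l=1}^{r-1}a_{i_l}$ (distinct indices) appearing in the coefficient of $x^r$, I must show that $LM\,\prod_{l=1}^{r-1}a_{i_l}/a_m^{\,r}$ is divisible by $a_1$; exactly as in Lemma \ref{lemma1} this is an integer multiple of $\dfrac{LM}{a_m\prod_{l=1}^{r-1}\frac{a_m}{(a_m,a_{i_l})}}$.

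The key point is to produce a spare factor $a_1$, and this is where the hypothesis $m\neq1$ enters. I would use the factorization \eqref{factor1_of_M}, namely $\frac{X_1^{2k-1}}{X_2^2}=\prod_{i<j}\frac{a_i}{(a_i,a_j)}\frac{a_j}{(a_i,a_j)}$, together with $\frac{X_1^{2k-1}}{X_2^2}\mid M$ from \eqref{factor_of_M}. The integrality step consumes only the factors $\frac{a_m}{(a_m,a_{i_l})}$, the ``$a_m$-halves'' of the pairs $\{m,i_l\}$. None of these is an ``$a_1$-half'' $\frac{a_1}{(a_1,a_j)}$ of a pair $\{1,j\}$: a pair $\{m,i_l\}$ can equal a pair $\{1,j\}$ only when $m=j$ and $i_l=1$, and then the factor used is the $a_m$-half $\frac{a_m}{(a_1,a_m)}$, leaving the $a_1$-half $\frac{a_1}{(a_1,a_m)}$ untouched. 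Hence all the factors $\frac{a_1}{(a_1,a_j)}$, $j\neq1$, occupy slots disjoint from the consumed ones, so $\prod_{j\neq1}\frac{a_1}{(a_1,a_j)}$ divides $M\big/\prod_{l}\frac{a_m}{(a_m,a_{i_l})}$; since $a_m\mid L$ absorbs the leftover $1/a_m$, the whole term is an integer multiple of $\prod_{j\neq1}\frac{a_1}{(a_1,a_j)}$.

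It then remains to check the elementary fact $a_1\mid\prod_{j\neq1}\frac{a_1}{(a_1,a_j)}=a_1^{\,n-1}\big/\prod_{j\neq1}(a_1,a_j)$, i.e.\ $\sum_{j\neq1}\min(\alpha_1,\alpha_j)\le(n-2)\alpha_1$ for every prime $p$, where $\alpha_i:=v_p(a_i)$. If $\alpha_1=0$ there is nothing to prove; if $\alpha_1>0$, then $(a_1,\dots,a_n)=1$ (the reduction at the start of the section) forces $\min_i\alpha_i=0$ to be attained at some $i_0\neq1$, so that term vanishes and the remaining $n-2$ terms are each at most $\alpha_1$. This gives the bound and hence divisibility by $a_1$. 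I expect the only real obstacle to be the combinatorial bookkeeping of the middle paragraph—checking that evaluating at $1/a_m$ with $m\neq1$ never uses up an $a_1$-half of $\frac{X_1^{2k-1}}{X_2^2}$—after which the concluding arithmetic is routine. Notably, this direction needs only the $X_1,X_2$ part of $M$; the higher factors $X_3,\dots,X_{2k}$ should instead be what is required for the companion case $m=1$.
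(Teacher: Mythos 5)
Your proposal is correct, and at the decisive step it takes a genuinely different route from the paper. The common skeleton is identical: reduce to a single squarefree monomial, note that $LM\prod_{l=1}^{r-1}a_{i_l}/a_m^r$ is an integer multiple of $LM\big/\bigl(a_m\prod_{l=1}^{r-1}\tfrac{a_m}{(a_m,a_{i_l})}\bigr)$, and use $a_m\mid L$ together with \eqref{factor_of_M}. The paper then extracts from $M$ the quantity \eqref{factor}, which spends two $a_1$-halves of $X_1^{2k-1}/X_2^2$ together with ratios of the higher-order gcd factors $X_3,X_5,\dots$ against $X_4,X_6,\dots$, and proves that \eqref{factor} is divisible by $a_1$ via coprimality identities such as $\bigl((a_1,a_2),(a_1,a_3)/(a_1,a_2,a_3)\bigr)=1$. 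You instead observe that evaluating at $1/a_m$ with $m\neq 1$ consumes only $a_m$-halves of pairs containing $m$ in the factorization \eqref{factor1_of_M}, so all $n-1$ factors $\tfrac{a_1}{(a_1,a_j)}$, $j\neq 1$, survive, and you finish with the valuation inequality $\sum_{j\neq 1}\min(\alpha_1,\alpha_j)\le (n-2)\alpha_1$, which holds because the reduction $(a_1,\dots,a_n)=1$ forces some $\alpha_{i_0}=0$ with $i_0\neq 1$ whenever $\alpha_1>0$. Your route buys two things: it uses only the $X_1,X_2$ part of $M$ (your closing remark is right --- the higher gcd factors are what the paper needs for the companion case $m=1$, Lemma \ref{lemma3}); and it replaces the paper's coprimality identities, which as literally stated can fail (e.g.\ $a_1=a_3=p^2$, $a_2=p$ gives $\bigl((a_1,a_2),(a_1,a_3)/(a_1,a_2,a_3)\bigr)=p$), by an explicit pigeonhole on prime valuations that makes transparent exactly where the hypothesis $(a_1,\dots,a_n)=1$ enters. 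What the paper's route buys is structural parallelism: \eqref{factor} is essentially the same object as \eqref{factor3}, so Lemmas \ref{lemma2} and \ref{lemma3} are handled by one computation. One point to make explicit in a final write-up (the paper is equally terse here): the spare factor $a_1$ must survive the rational coefficients $\pm 1/(r2^{n+1-r})$ of the monomials; this is fine because those denominators divide the constant factor of $M$, so the $a_1$ you produce comes entirely from $X_1^{2k-1}/X_2^2$, and multiplying by the integers obtained after clearing denominators preserves divisibility by $a_1$.
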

	
	\begin{proof}
		Again, the denominator of the coefficients of the monomials in $a_1,\dots,a_n$ in the coefficients of $x^i$ all divide the constant factor in $M$. It remains to consider the term
$$LM\,\df{\prod_{l=1}^{r-1}a_{i_l}}{{a_m^r}}$$
for each degree, $1\leq r\leq n+1$. This is an integer multiple of  $$\frac{LM}{a_m\prod_{l=1}^{r-1}\frac{a_m}{(a_m,a_{i_l})}}. $$
		First, note that $a_m|L$. Second, observe that
$$\df{M}{\prod_{l=1}^{r-1}\frac{a_m}{(a_m,a_{i_l})}}$$
 is an integer multiple of
		\begin{align}\label{factor}
			\frac{a_1}{(a_1,a_2)}\frac{a_1}{(a_1,a_3)}  \frac{(a_1,a_2,a_3)}{(a_2,a_3,a_4,a_5)}\cdots  \frac{(a_1,\dots, a_{2k-1})}{(a_2,\dots, a_{2k})},
		\end{align}
		which is divisible by $a_1$. To see this, we can rewrite \eqref{factor} as $$\frac{a_1}{(a_1,a_2)}\frac{a_1}{X},$$ where $$X=\frac{ (a_1,a_3)}{(a_1,a_2,a_3)}\left[\prod _{i=3}^ {k}\frac{(a_2,\dots, a_{2i-1})}{(a_1,a_2,\dots, a_{2i-1})}\right](a_2,a_3,\dots, a_{2k}).$$
		
		Note that
\begin{align*}\left((a_1,a_2),\frac{ (a_1,a_3)}{(a_1,a_2,a_3)}\right)&=1, \\
  \left((a_1,a_2),\frac{(a_2,\dots, a_{2i-1})}{(a_1,a_2,\dots, a_{2i-1})}\right)&=1,\\
   \intertext{and}    \left((a_1,a_2),(a_2,a_3,\dots, a_{2k})\right)&=1.
    \end{align*}
    Thus $\left((a_1,a_2),X\right)=1$, and $(a_1,a_2)$ divides $\frac{a_1}{X}$. Therefore, $a_1$ divides $$\frac{a_1}{(a_1,a_2)}\frac{a_1}{X},$$
     which finishes the proof.
	\end{proof}
	
	\begin{lemma}\label{lemma3}
		$LM\,F_j(\frac{1}{a_1})$ is divisible by $a_1$.
	\end{lemma}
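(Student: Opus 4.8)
The plan is to follow the template of the proof of Lemma \ref{lemma2}, but to isolate the point $\frac{1}{a_1}$, because here the ``omitted variable'' that powered Lemma \ref{lemma2} is unavailable for the two top-degree terms. First I would clear all the easy monomials. Writing the coefficient of $x^m$ in $F_j$ as a polynomial in $a_1,\dots,a_n$, its rational coefficients have denominators dividing $m2^{n+1-m}\mid 2^{2k}\prod_{j=1}^k(2j+1)$, which divides the constant factor of $M$; so it suffices to treat, at $\frac{1}{a_1}$, the terms $\frac{\prod_{i\in T}a_i}{a_1^{m}}$ with $T\subset\{1,\dots,n\}$, $|T|=m-1$. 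In every such term exactly one surplus power of $a_1$ survives after pairing the numerator factors $a_i$ against powers of $a_1$. Whenever $T$ omits some index $s\in\{2,\dots,n\}$, that surplus $a_1$ is compensated using the factor $\frac{a_1}{(a_1,a_s)}$ supplied by $M$ together with the coprimality relations among the gcds, exactly as in the step producing \eqref{factor}; hence after multiplication by $LM$ the term is divisible by $a_1$. The only monomials escaping this are $T=\{1,\dots,n\}$ and $T=\{2,\dots,n\}$, and summed over all subintervals these two assemble precisely into $LM\int_0^1((a_1x))\,a_2x\,a_3x\cdots a_nx\,dx$. Thus the lemma reduces to showing that this single quantity is divisible by $a_1$.

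Setting $P=\prod_{i=2}^n a_i$, I would evaluate $\int_0^1((a_1x))P x^{n-1}\,dx$ by splitting at the points $\frac{k}{a_1}$ and writing $((a_1x))=a_1x-k-\tfrac12$ on each piece. The clean part $\int_0^1(a_1x-\tfrac12)Px^{n-1}\,dx=P\big(\frac{a_1}{n+1}-\frac1{2n}\big)$ becomes, after multiplying by $LM$, divisible by $a_1$, since its denominators $n+1$ and $2n$ divide the constant factor of $M$ and $a_1\mid L$. This leaves $-\frac{P}{n a_1^{n}}\sum_{k=0}^{a_1-1}k\big[(k+1)^{n}-k^{n}\big]$. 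Summation by parts rewrites the inner sum as $a_1^{\,n+1}-a_1^{\,n}-\sum_{k=1}^{a_1-1}k^{n}$; the first two pieces give $-\frac{P}{n}(a_1-1)$, removable because $n$ divides the constant factor $\mathrm{lcm}(2^{2k}\prod_{j=1}^k(2j+1),nB)$ of $M$ (as $nB$ is one of its arguments) and $a_1\mid L$. Finally, Faulhaber's formula yields $\sum_{k=1}^{a_1-1}k^{n}=\frac{B_{n+1}(a_1)-B_{n+1}}{n+1}$, a polynomial in $a_1$ that is divisible by $a_1$ and whose denominator is governed by the denominator $B$ of $B_{n+1}(x)$.

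Writing $\sum_{k=1}^{a_1-1}k^{n}=a_1\Psi(a_1)$, the surviving contribution is (up to sign) $\frac{P}{n}\cdot\frac{\Psi(a_1)}{a_1^{\,n-1}}$. Expanding $\Psi(a_1)=\sum_j d_j a_1^{\,j}$ and distributing the $n-1$ factors of $P$ against the powers of $a_1$ as in the proof of Lemma \ref{lemma1}, each term $d_j\frac{P}{a_1^{\,n-1-j}}$ is an integer multiple of $\frac{1}{\prod_{i=2}^n \frac{a_1}{(a_1,a_i)}}$, whose denominator divides $\frac{X_1^{2k-1}}{X_2^2}$ and hence $M$ by \eqref{factor_of_M}--\eqref{factor1_of_M}. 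Consequently the surviving contribution is an integer multiple of $\frac{1}{C'\prod_{i=2}^n\frac{a_1}{(a_1,a_i)}}$, where $C'$ gathers the constant denominators from $\frac1n$ and from the coefficients $d_j$ of $\Psi$. Granting $C'\mid \mathrm{lcm}(2^{2k}\prod_{j=1}^k(2j+1),nB)$, the product of $M$ with the surviving contribution is an integer, and one further factor of $L$ (divisible by $a_1$) yields divisibility by $a_1$, completing Lemma \ref{lemma3} and with it the theorem.

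The hard part will be the last divisibility $C'\mid\mathrm{lcm}(2^{2k}\prod_{j=1}^k(2j+1),nB)$. Since the coefficients of $\Psi$ have denominator dividing $(n+1)B$, one is led to verify $n(n+1)B\mid\mathrm{lcm}(2^{2k}\prod_{j=1}^k(2j+1),nB)$ prime by prime. The factors $n$ and $B$ are already present via $nB$, so the delicate primes are the odd $p\mid(n+1)=2k+1$. Here the argument rests on two points: that $n+1=2k+1$ is the $j=k$ factor of $\prod_{j=1}^k(2j+1)$, which (together with the smaller odd multiples of $p$ occurring in that product) supplies ample powers of each such $p$; and the von Staudt--Clausen description of $B$ combined with the cancellation of $p=n+1$ in the coefficient $(n+1)B_n$ of $x$ in $B_{n+1}(x)$, which keeps $v_p(B)$ small enough that the power of $p$ in $\prod_{j=1}^k(2j+1)$ dominates $v_p(n+1)+v_p(B)$. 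Making this valuation bookkeeping uniform in $k$ is the real content of the lemma.
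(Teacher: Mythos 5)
Your proposal is correct and follows essentially the same route as the paper's own proof: the same reduction (via the gcd--coprimality mechanism of Lemma \ref{lemma2}) to the two surviving monomials that assemble into $LM\int_0^1((a_1x))\,a_2x\cdots a_nx\,dx$, the same evaluation via Faulhaber's formula $\sum_{k=1}^{a_1-1}k^n=\frac{B_{n+1}(a_1)-B_{n+1}}{n+1}$ with its vanishing constant term, and the same key divisibility $n(n+1)B\mid \mathrm{lcm}\bigl(2^{2k}\prod_{j=1}^{k}(2j+1),\,nB\bigr)$, which the paper likewise rests on the Kellner--Sondow bound $B\mid\prod_{p\le (n+2)/2}p$. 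The only differences are cosmetic: you unify the paper's two monomial cases into the single condition that $T$ omits some index $s\in\{2,\dots,n\}$, and you spell out the prime-by-prime valuation check of the lcm divisibility that the paper asserts in one line.
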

	\begin{proof}
		For each degree $r$ in the polynomial $F_j(x)$, it suffices to show that
$$\df{LM}{\prod_{l=1}^{r-1}a_{i_l}/{a_1^r}}$$
 is divisible  by $a_1$. Let $1\leq r\leq n$, and suppose that $a_1$ appears in $\prod_{l=1}^{r-1}a_{i_l}$;  we may assume $a_{i_1}=a_1$. Then  we can write $$LM\,\frac{\prod_{l=1}^{r-1}a_{i_l}}{a_1^r} = LM\,\frac{\prod_{l=2}^{r-1} a_{i_l}}{a_1^{r-1}},$$
which is an integer multiple of
$$\frac{LM}{a_1\prod_{l=2}^{r-1}\frac{a_1}{(a_1,a_{i_l})}}.$$

Let $I=\{1,2,\dots, n\}$ be the index set. Since $|\{i_2,i_3,\dots, i_{r-1}\}|=r-2\leq n-2$, there exists an integer $m\in I\setminus\{1,i_2,i_3,\dots, i_{r-2}\}$.  By \eqref{factor1_of_M}, we have
		
$$\df{\frac{X_1^{2k-1}}{X_2^2}}{\prod_{l=2} ^{r-1}\frac{a_1}{(a_1,a_{i_l})}}$$
 is an integer multiple of  $\frac{a_1}{(a_1,a_m)}\frac{a_m}{(a_m,a_p)}$ for $p\neq 1,m$. By permuting the index set $I$, we may assume that $m=2$.  Then observe that
 $$\df{M}{\prod_{l=2} ^{r-1}\frac{a_1}{(a_1,a_{i_l})}}$$
  is an integer multiple of
		\begin{align}\label{factor3}
			\frac{a_1}{(a_1,a_2)}\frac{a_2}{(a_2,a_3)}  \frac{(a_1,a_2,a_3)}{(a_2,a_3,a_4,a_5)}\dots  \frac{(a_1,\dots, a_{2k-1})}{(a_2,\dots, a_{2k})}.
		\end{align}
	
	Let
	\begin{equation}\label{factor_Y}
		Y= \prod _{i=2}^ {k}\frac{(a_2,\dots, a_{2i-1})}{(a_1,a_2,\dots, a_{2i-1})}(a_2,a_3,\dots, a_{2k}).
	\end{equation}
 Then we rewrite \eqref{factor3} as $$\frac{a_1}{(a_1,a_2)}\frac{a_2}{Y}.$$ Since
 $$\left((a_1,a_2),\frac{(a_2,\dots, a_{2i-1})}{(a_1,a_2,\dots, a_{2i-1})}\right)=1,\quad \text{and}\quad  \left((a_1,a_2),(a_2,a_3,\dots, a_{2k})\right)=1,$$
  we have $$\left((a_1,a_2),Y\right)=1.$$ Hence,
  $$(a_1,a_2)\mid\frac{a_2}{Y}\quad\text{and}\quad a_1\mid \frac{a_1}{(a_1,a_2)}\frac{a_1}{Y}.$$ So, $$LM\,\frac{\prod_{l=1}^{r-1}a_{i_l}}{a_1^r}$$
  is divisible by $a_1$.

 Suppose that $r\leq n-1$ and $a_1$ does not appear in $\prod_{l=1}^{r-1}a_{i_l}$. Then
 $$LM\,\frac{\prod_{l=1}^{r-1}a_{i_l}}{a_1^r}$$
 is an integer multiple of
 $$\frac{LM}{\prod_{l=1} ^{r-1}\frac{a_1}{(a_1,a_{i_l})}}.$$
 Since $r-1\leq n-2$, there exists an integer $m$ such that $m\in I\setminus \{1,{i_1},{i_2},\dots, {i_{r-1}}\}$. Without loss of generality, we may assume that $m=2$. Then
 $$\df{\frac{X_1^{2k-1}}{X_2^2}}{\prod_{l=1} ^{r-1}\frac{a_1}{(a_1,a_{i_l})}}$$  is an integer multiple of  $\frac{a_1}{(a_1,a_2)}\frac{a_2}{(a_2,a_3)}$, and
 $$ \df{M}{\prod_{l=1} ^{r-1}\frac{a_1}{(a_1,a_{i_l})}}$$ is an integer multiple of  $$\frac{a_1}{(a_1,a_2)}\frac{a_2}{Y},$$ for $Y$ as in \eqref{factor_Y}. In this case, $$LM\,\frac{\prod_{l=1}^{r-1}a_{i_l}}{a_1^r}$$ is also divisiblly by $a_1$.

 Therefore, $$LM\frac{\prod_{l=1}^{r-1}a_{i_l}}{a_1^r}$$
 is divisible by $a_1$ whenever $r\leq n-1$,  or $r=n$ and $1\in \{i_1,\dots, i_{r-1}\}$. 	Hence, in $F_j(x)$ we can ignore the monomials involving $a_1$ in the coefficients of $x^n$ and all other monomials in the coefficients of $x^j$ for $1\leq j\leq n-1$, since adding or substracting a multiple of $a_1$ does not change divisibility of $a_1$. Hence, after removing some terms that are divisible by $a_1$, it suffices to show that $$a_1\mid LM\int_0^1((a_1x))a_2x\cdots  a_nx \,dx.$$

Evaluating the integral, we have
$$ \int_0^1((a_1x))a_2x\cdots  a_nx \,dx =\int _0^1 \left(a_1x-\frac{1}{2}\right)a_2x\cdots a_nx\,dx -a_2\cdots a_n \sum_{j=1}^{a_1-1}\int _{\frac{j}{a_1}}^{\frac{j+1}{a_1}}jx^{n-1}dx,$$
where the first term on the right-hand side multiplied by $LM$ is divisible by $a_1$. For the second term, $$-a
_2\cdots a_n\sum_{j=1}^{a_1-1}\int _{\frac{j}{a_1}}^{\frac{j+1}{a_1}}jx^{n-1}dx=\frac{a_2\cdots a_n}{n}\left[\left(\frac{1}{a_1}\right)^n\sum_{k=1}^{a_1-1} k^n-(a_1-1)\right].$$

It suffices to consider the first term
$$\frac{a_2\cdots a_n}{na_1^n}\sum_{k=1}^{a_1-1} k^n.$$
We can write the sum of $n^{th}$ powers in terms of Bernoulli polynomials and numbers as \cite[p.~589, Section 24.4(iii), no.~24.4.7]{nist}
 \begin{equation}\label{sumofpowers}
 \sum_{k=1} ^{a_1-1} k^n =
\frac{B_{n+1}(a_1)-B_{n+1}}{n+1}=\frac{B_{n+1}(a_1)}{n+1},
\end{equation}
which is a rational polynomial  in $a_1$ whose constant term is $0$.
Therefore,  the denominator of $\frac{\sum_{k=1}^{a_1-1} k^n}{n}$ divides $n(n+1)B$.
Recall that $B$ is defined in the statement of Theorem \ref{mcintoshgen}. By \cite{power-sum},
$$B\bigg|\prod_{\stackrel{p\leq \frac{n+2}{2}}{p \text{ prime }}}p.$$
 Hence,
$$n(n+1)B\Big| \mathrm{lcm}(2^{2k}\prod_{j=1}^k(2j+1), nB).$$

Thus, recalling the definition \eqref{M} of $M$, we find that
 $$\frac{a_2\cdots a_n}{na_1^n}\sum_{k=1}^{a_1-1} k^n \times\mathrm{lcm}(2^{2k}\prod_{j=1}^k(2j+1), nB)$$ is an integer multiple of
 $\frac{a_2\cdots a_n}{a_1^{n-1}}$. Moreover,  $\frac{a_2\cdots a_n}{a_1^{n-1}}$  is an integer multiple of $$\frac{1}{\prod_{i=2}^n \frac{a_1}{(a_1,a_i)}},$$ whose denominator will be cancelled out by $\frac{X_1^{2k+1}}{X_2^2}$ in $M$ after multiplying by $M$. Therefore, $$M\frac{a_2\cdots a_n}{na_1^n}\sum_{k=1}^{a_1-1} k^n$$  is an integer.
    Lastly, since $a_1\mid L$, we conclude that  $$LM\frac{a_2\cdots a_n}{na_1^n}\sum_{k=1}^{a_1-1} k^n$$ is divisible by $a_1$. This finishes the proof of Lemma \ref{lemma3} and Theorem \ref{mcintoshgen} as well. 	\end{proof}
	
\end{proof}
\begin{example}
	Four examples for $I(a,1,1,\dots, 1)$. For $n=4$, see Example \ref{example_mcintosh}.
	
	\textup{(a)} $n=6$. $$I(a,1,\dots,1) = \frac{16-28a^2+21a^4}{4032a^5},$$ $$B=6, $$
	$$4032= 2^6\cdot 3^2\cdot 7 .$$
	Theorem \ref{mcintoshgen} asserts that $I(a,1,\dots,1)$ multiplied by  $\mathrm{lcm}(6\cdot6, 2^6\cdot 7\cdot 5\cdot3)a^5= 2^6\cdot 3^2\cdot 5\cdot 7 a^5$ is an integer.
	
	\textup{(b)} $n=8$. $$I(a,1,\dots,1) =  \frac{-48 + 80 a^2 - 42 a^4 + 15 a^6}{11520 a^7},$$
 $$B=10, $$
	$$11520= 2^8\cdot 3^2\cdot 5 .$$ Theorem \ref{mcintoshgen} implies that $I(a,1,\dots,1)$ multiplied by  $\mathrm{lcm}(10\cdot8, 2^8 \cdot 9\cdot 7\cdot 5\cdot3)a^7= 2^8\cdot 3^3\cdot 5\cdot 7 a^7$ is an integer.
	
		\textup{(c)} $n=10$. $$I(a,1,\dots,1) =  \frac{1280-2112 a^2+1056 a^4-264 a^6+55 a^8}{168960 a^9},$$ $$B=6, $$
	$$168960= 2^{10}\cdot 3\cdot 5\cdot 11 .$$ Theorem \ref{mcintoshgen} asserts that $I(a,1,\dots,1)$ multiplied by  $\mathrm{lcm}(6\cdot10, 2^{10}\cdot 11\cdot 9\cdot 7\cdot 5\cdot3)a^7= 2^{10}\cdot3^3 \cdot 5\cdot 7\cdot11$ is an integer.
	
		\textup{(d)} $n=12$. $$I(a,1,\dots,1) =  \frac{-353792+582400 a^2-288288 a^4+68640 a^6-10010 a^8+1365 a^{10}}{16773120 a^{11}},$$ $$ B=210, $$
	$$16773120= 2^{12}\cdot 3^2\cdot 5\cdot 7\cdot 13 .$$ Theorem \ref{mcintoshgen} implies that $I(a,1,\dots,1)$ multiplied by  $\mathrm{lcm}(210\cdot12, 2^{12}\cdot 13\cdot 11\cdot 9\cdot 7\cdot 5\cdot3)a^7= 2^{12}\cdot3^3 \cdot 5\cdot 7\cdot11\cdot 13$ is an integer.
\end{example}

\section{Generalization to Higher Order Bernoulli Polynomials }

Recall from \eqref{bernoullifunction} that the periodic Bernoulli functions $\tilde{B}_n(x)$ are defined  by $\tilde{B}_n(x)= B_n(\{x\})$, where $\{x\}=x-\lfloor x\rfloor$. The Fourier series for the periodic Bernoulli functions of odd index are given by \cite[p.~592, Equation 24.8.2]{nist}
\begin{align}\label{bernoullifourier}
	\tilde{B}_{2n+1} (x) =(-1) ^{n+1}   \frac{2(2n+1) !}{(2\pi )^{2n+1} } \sum_{m=1} ^\infty \frac{\mathrm{sin} (2m\pi x)}{m^{2n+1}},\quad x\in \mathbb{R}, n\in \mathbb{N}.
\end{align}

Define the integral $I_{k}(a_1,a_2,\dots, a_{2n})$ by
\begin{align}\label{bernoulliintegral}
	I_{k}(a_1,a_2,\dots, a_{2n}) := \int _0^1 \tilde{B}_k(a_1x) \tilde{B}_k(a_2x) \cdots \tilde{B}_k(a_{2n}x) \, dx.
\end{align}
It follows that \begin{align}\label{higher_eq}
	I_{2k+1}& (a_1,a_2,\dots, a_{2n})= \left[\frac{2(2k+1)!}{(2\pi)^{2k+1}}\right]^{2n} \int _0^1 \underset{l\to \infty}{\mathrm{lim}}\sum _{1\leq u_1,u_2,\dots , u_{2n}\leq l}\prod_{i=1}^{2n} \frac{\mathrm{sin}(2u_i\pi a_i x)}{u_i ^{2k+1}}\, dx  \nonumber\\
	&= \frac{(-1) ^n}{2^{2n}}\left[\frac{2(2k+1)!}{(2\pi)^{2k+1}}\right]^{2n} \int_0^1 \underset{l\to \infty}{\mathrm{lim}}\sum _{1\leq |u_1|,|u_2|,\dots , |u_{2n}|\leq l}\frac{\mathrm{cos}\left[2\pi (\sum _{i=1}^{2n}u_ia_i)x\right]}{(u_1u_2\cdots u_{2n})^{2k+1}}\, dx\nonumber\\
	&= (-1) ^n\left[\frac{(2k+1)!}{(2\pi)^{2k+1}}\right]^{2n}\sum_{\substack{u_i\in \mathbb{Z}\setminus\{0\}\\ \sum_{i=0}^{2n}a_iu_i=0}}\frac{1}{(u_1u_2\cdots u_{2n})^{2k+1}}.
\end{align}

\begin{proposition}\label{prop_gcd}
	Let $r$ be a positive integer. Then, $$I_{2k+1}(ra_1,\dots, ra_{2n})=I_{2k+1}(a_1,\dots, a_{2n}).$$
\end{proposition}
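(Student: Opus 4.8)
The plan is to reduce the claimed scaling invariance to the elementary observation that the integrand, regarded as a function of a single real variable, is periodic with period $1$. Set
$$f(y):=\prod_{i=1}^{2n}\tilde{B}_{2k+1}(a_iy).$$
Since each $a_i$ is a positive integer and $\tilde{B}_{2k+1}(t)=B_{2k+1}(\{t\})$ depends only on $t$ modulo $1$, we have $\tilde{B}_{2k+1}\bigl(a_i(y+1)\bigr)=\tilde{B}_{2k+1}(a_iy+a_i)=\tilde{B}_{2k+1}(a_iy)$, so $f(y+1)=f(y)$; that is, $f$ is $1$-periodic. With this notation $I_{2k+1}(a_1,\dots,a_{2n})=\int_0^1 f(y)\,dy$, while $I_{2k+1}(ra_1,\dots,ra_{2n})=\int_0^1 f(rx)\,dx$.

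Next I would perform the change of variables $u=rx$, which gives
$$I_{2k+1}(ra_1,\dots,ra_{2n})=\int_0^1 f(rx)\,dx=\frac{1}{r}\int_0^r f(u)\,du.$$
Because $r$ is a positive integer and $f$ has period $1$, the interval $[0,r]$ decomposes into $r$ unit translates of $[0,1]$, on each of which $f$ repeats, so $\int_0^r f(u)\,du=r\int_0^1 f(u)\,du$. Substituting this back yields $I_{2k+1}(ra_1,\dots,ra_{2n})=\int_0^1 f(u)\,du=I_{2k+1}(a_1,\dots,a_{2n})$, which is the assertion.

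There is essentially no hard step: the only points requiring care are that the $1$-periodicity of $f$ genuinely uses the $a_i$ being integers (so that each $a_i$ is an integer multiple of the period of $\tilde{B}_{2k+1}$), and that $r$ be a positive integer (so that $[0,r]$ splits into whole periods). A completely parallel argument is available from the reciprocal-sum formula \eqref{higher_eq}: replacing each $a_i$ by $ra_i$ turns the constraint $\sum_i a_iu_i=0$ into $\sum_i ra_iu_i=0$, which defines the same set of lattice points $(u_1,\dots,u_{2n})$ since $r\neq 0$, so the two sums, and hence the two integrals, coincide. Either route establishes the proposition.
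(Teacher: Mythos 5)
Your proof is correct, but your primary route is genuinely different from the paper's. The paper proves the proposition in one line from the reciprocal-sum identity \eqref{higher_eq}: writing $L(a_1,\dots,a_{2n})$ for the sum on the far right of \eqref{higher_eq}, it observes that the constraint $\sum_i ra_iu_i=0$ defines the same lattice points as $\sum_i a_iu_i=0$, so $L(ra_1,\dots,ra_{2n})=L(a_1,\dots,a_{2n})$, and the equality of integrals follows since $I_{2k+1}$ is a fixed constant multiple of $L$. This is exactly the alternative you sketch in your closing paragraph. Your main argument --- the substitution $u=rx$ combined with the $1$-periodicity of $f(y)=\prod_{i=1}^{2n}\tilde{B}_{2k+1}(a_iy)$, so that $\int_0^r f = r\int_0^1 f$ --- is more elementary and self-contained: it needs nothing beyond the periodicity of $\tilde{B}_{2k+1}$ and basic calculus, whereas the paper's route inherits the Fourier-series manipulation in \eqref{higher_eq} (including its interchange of limit and integral). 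The trade-off is that the paper gets the proposition essentially for free from machinery it has already set up and will reuse, while your argument would stand even if one never introduced the reciprocal sums at all; it also transparently extends to Bernoulli functions of any index, not just odd ones, since it never uses the Fourier expansion \eqref{bernoullifourier}.
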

\begin{proof}
	Denote the sum on the far right-hand side of  \eqref{higher_eq} by $L(a_1,\dots, a_{2n})$. We have
	\begin{align*} L(ra_1,\dots, ra_{2n})=&\sum_{\substack{u_i\in \mathbb{Z}\setminus\{0\}\\ \sum_{i=0}^{2n}ra_iu_i=0}}\frac{1}{(u_1u_2\cdots u_{2n})^{2k+1}}\\=&\sum_{\substack{u_i\in \mathbb{Z}\setminus\{0\}\\ \sum_{i=0}^{2n}a_iu_i=0}}\frac{1}{(u_1u_2\cdots u_{2n})^{2k+1}}=L(a_1,\cdots, a_{2n}).
\end{align*}
\end{proof}

\begin{proposition}\label{prop1}
	For any $a_i\in \mathbb{Q}$, any $k\in \mathbb{N}$, $n\in \mathbb{N}_{\geq 1}$, the sum
	\begin{align}\label{sum}
		\sum_{\substack{u_i\in \mathbb{Z}\setminus\{0\}\\ \sum_{i=0}^{2n}a_iu_i=0}}\frac{1}{(u_1u_2\cdots u_{2n})^{2k+1}}
	\end{align}
is a rational multiple of $\pi^{2n(2k+1)}$.
\end{proposition}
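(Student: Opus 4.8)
The plan is to leverage the identity already established in \eqref{higher_eq}, which, writing $L(a_1,\dots,a_{2n})$ for the sum \eqref{sum}, can be read as
\begin{equation*}
L(a_1,\dots,a_{2n}) = (-1)^n\left[\frac{(2\pi)^{2k+1}}{(2k+1)!}\right]^{2n} I_{2k+1}(a_1,\dots,a_{2n}).
\end{equation*}
Since $\left[(2\pi)^{2k+1}\right]^{2n} = 2^{2n(2k+1)}\,\pi^{2n(2k+1)}$ and the prefactor $(-1)^n 2^{2n(2k+1)}/\left((2k+1)!\right)^{2n}$ is rational, the proposition reduces to the single assertion that the integral $I_{2k+1}(a_1,\dots,a_{2n})$ is a rational number; the power of $\pi$ produced is then exactly $\pi^{2n(2k+1)}$, as claimed.

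Before invoking \eqref{higher_eq} I would first reduce to integer frequencies, because the orthogonality step underlying it — namely $\int_0^1 \cos\left(2\pi x\sum_i u_ia_i\right)\,dx = 0$ unless $\sum_i u_ia_i=0$ — relies on $\sum_i u_ia_i$ being an integer for every choice of the $u_i$. Given $a_i\in\mathbb{Q}$, I would choose a common denominator $q$ and set $b_i:=qa_i\in\mathbb{Z}$. The constraint $\sum_i a_iu_i=0$ is identical to $\sum_i b_iu_i=0$, so the index set of the sum is unchanged and $L(a_1,\dots,a_{2n})=L(b_1,\dots,b_{2n})$; this is precisely the scaling already recorded in Proposition \ref{prop_gcd}. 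Hence there is no loss of generality in assuming $a_1,\dots,a_{2n}\in\mathbb{Z}$, so that \eqref{higher_eq} applies.

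It then remains to show $I_{2k+1}(a_1,\dots,a_{2n})=\int_0^1\prod_{i=1}^{2n}\tilde{B}_{2k+1}(a_ix)\,dx\in\mathbb{Q}$ for integers $a_i$. The integrand is bounded and piecewise polynomial: the finitely many breakpoints $j/a_i$, with $0\le j\le a_i$ and $1\le i\le 2n$, are rational and partition $[0,1]$ into finitely many subintervals, and on each such subinterval every factor $\tilde{B}_{2k+1}(a_ix)=B_{2k+1}(a_ix-\lfloor a_ix\rfloor)$ coincides with $B_{2k+1}(a_ix-m)$ for a fixed integer $m$, a polynomial in $x$ with rational coefficients since the Bernoulli polynomials defined in \eqref{bernoullipoly} have rational coefficients. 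The product is therefore a rational polynomial on each subinterval, its antiderivative again has rational coefficients, and evaluation at the rational endpoints yields a rational number; summing the finitely many contributions gives $I_{2k+1}\in\mathbb{Q}$. Combined with the displayed relation and the reduction above, this proves that $L(a_1,\dots,a_{2n})$ is a rational multiple of $\pi^{2n(2k+1)}$.

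I would expect the only genuinely delicate point to be the justification of \eqref{higher_eq} itself — the interchange of the limit in $l$ with the integration together with the termwise orthogonality — which is exactly where the integrality of the $a_i$ is used; this is already carried out in the derivation preceding the proposition, so it may be cited rather than reproved. The remaining input, the piecewise-rationality of the integrand, is routine and parallels the explicit subinterval computations performed in Sections 2 and 3 for the case $\tilde{B}_1=((x))$, the one point to watch being that both the breakpoints and the polynomial coefficients are rational, which holds once the $a_i$ have been cleared to integers.
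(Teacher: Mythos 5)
Your proposal is correct and takes essentially the same route as the paper's own proof, which likewise observes that the integrand of $I_{2k+1}$ is piecewise polynomial with rational coefficients (hence $I_{2k+1}\in\mathbb{Q}$) and then concludes via the identity \eqref{higher_eq}. Your write-up is in fact slightly more careful than the paper's two-line proof, since you make explicit the reduction from $a_i\in\mathbb{Q}$ to integer $a_i$ -- needed for the orthogonality step underlying \eqref{higher_eq} -- whereas the paper leaves that point implicit.
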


\begin{proof}
	Since the integrand of  $I_{2k+1} (a_1,a_2,\dots, a_{2n})$ is a product of polynomials, it follows that $I_{2k+1} (a_1,a_2,\cdots, a_{2n})$  is a rational number. Therefore, Proposition \ref{prop1} is a consequence of  \eqref{higher_eq}.
\end{proof}

\begin{cor}\label{reciprocal_sum}
	Let $A=(a_{ij}) \in \mathrm{GL}_{2n}(\mathbb{Z})$, and let $L_i$ denote the linear forms defined by the rows of the matrix $A$, i.e. $L_i= a_{i,1} u_1 +a_{i,2} u_2+\cdots +a_{i,n }  u_n$. Then for any $c_i\in \mathbb{Q}$, any  $k\in \mathbb{N}$, and $n\in \mathbb{N}_{\geq 1}$, the sum
	\begin{align}\label{sum2}
		\sum_{\substack{u_i\in \mathbb{Z}\setminus\{0\}\\ \sum_{i=0}^{2n}c_iu_i=0}}\frac{1}{(L_1L_2\cdots L_{2n})^{2k+1}}
	\end{align} is a rational multiple of $\pi^{2n(2k+1)}$.
\end{cor}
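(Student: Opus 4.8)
The plan is to reduce the sum over linear forms back to the already-established Proposition \ref{prop1} by performing an invertible change of variables. The key observation is that since $A \in \mathrm{GL}_{2n}(\mathbb{Z})$, the map $\mathbf{u} \mapsto \mathbf{v} = A\mathbf{u}$ is a bijection of the integer lattice $\mathbb{Z}^{2n}$ onto itself, with inverse given by $A^{-1} \in \mathrm{GL}_{2n}(\mathbb{Z})$. Under this substitution, the denominator $(L_1 L_2 \cdots L_{2n})^{2k+1}$ becomes exactly $(v_1 v_2 \cdots v_{2n})^{2k+1}$, which is precisely the shape of the summand in \eqref{sum}.

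The main steps are as follows. First I would set $v_i = L_i(\mathbf{u}) = \sum_j a_{ij} u_j$, so that $\mathbf{v} = A\mathbf{u}$ and, because $A$ is unimodular over $\mathbb{Z}$, this is a bijection $\mathbb{Z}^{2n} \to \mathbb{Z}^{2n}$. Second, I would rewrite the constraint: the linear condition $\sum_i c_i u_i = 0$ in the old variables becomes a linear condition $\sum_i c_i' v_i = 0$ in the new variables, where the coefficients $c_i'$ are read off from $A^{-1}$, namely $\mathbf{c}' = (A^{-1})^{T}\mathbf{c}$; since the $c_i$ are rational and $A^{-1}$ has integer entries, the $c_i'$ remain rational. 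Third, I would address the exclusion region: the original sum ranges over $u_i \in \mathbb{Z}\setminus\{0\}$, whereas after the substitution the natural index set is $v_i \in \mathbb{Z}\setminus\{0\}$. These two regions need not coincide, so the sum over the $v$-variables differs from \eqref{sum} by finitely many lower-dimensional sub-sums supported on coordinate hyperplanes (where some $u_i$ or $v_i$ vanishes).

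The main obstacle is precisely this mismatch between the two exclusion regions, together with ensuring absolute convergence so that the rearrangement under the bijection is legitimate. To handle the convergence, I would note that the exponent $2k+1 \geq 1$ combined with the single linear constraint leaves a $(2n-1)$-dimensional lattice of summation, and the decay of $1/(v_1\cdots v_{2n})^{2k+1}$ guarantees absolute convergence (this is already implicit in the validity of \eqref{higher_eq} and Proposition \ref{prop1}). To handle the boundary terms, I would observe that each sub-sum obtained by fixing one or more of the $v_i$ (or $u_i$) to a nonzero value while the remaining variable on a coordinate hyperplane vanishes is itself, after a further affine reduction, a reciprocal power sum of the same type in fewer variables; by induction on $n$ each such lower-dimensional sum is a rational multiple of an appropriate even power of $\pi$, and collecting them preserves the property that the total is a rational multiple of $\pi^{2n(2k+1)}$. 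Once the bijection is justified and the boundary contributions are accounted for, the principal term is exactly a sum of the form \eqref{sum} (possibly after splitting according to the sign pattern forced by the exclusion), so Proposition \ref{prop1} applies directly and yields the claimed rationality of the coefficient of $\pi^{2n(2k+1)}$.
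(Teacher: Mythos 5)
Your first two steps are precisely the paper's proof: set $B=A^{-1}$, substitute $\mathbf{v}=A\mathbf{u}$ (a bijection of $\mathbb{Z}^{2n}$ by unimodularity), transport the constraint to $\mathbf{c}'=(A^{-1})^{T}\mathbf{c}\in\mathbb{Q}^{2n}$, and invoke Proposition \ref{prop1}; the paper does exactly this and stops. You are also right that, under the literal reading of the index set in \eqref{sum2}, there is a mismatch which the paper passes over in silence: $A$ carries $\{u_i\neq 0\ \forall i\}$ onto $\{(A^{-1}\mathbf{v})_i\neq 0\ \forall i\}$, not onto $\{v_i\neq 0\ \forall i\}$, so the transported sum differs from \eqref{sum} by sub-sums over points where some coordinate of $A^{-1}\mathbf{v}$ vanishes. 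The discrepancy is real: for $2n=2$, $L_1=u_1+u_2$, $L_2=u_2$, i.e.\ $A=\left(\begin{smallmatrix}1&1\\0&1\end{smallmatrix}\right)$, and $(c_1,c_2)=(1,0)$, the sum \eqref{sum2} is empty, while \eqref{sum} with transported coefficients $(1,-1)$ equals $\sum_{t\neq0}t^{-2(2k+1)}=2\zeta(4k+2)\neq0$.

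However, your treatment of these boundary sub-sums has a genuine gap, in two respects. First, they are \emph{not} ``reciprocal power sums of the same type in fewer variables'': each one retains all $2n$ factors $v_j^{-(2k+1)}$ but runs over a proper rational sublattice of $\mathbb{Z}^{2n}$, so after parametrization it is a sum of $1/(\ell_1\ell_2\cdots\ell_{2n})^{2k+1}$ with $2n$ integer linear forms in $m<2n$ variables --- more forms than variables --- a shape covered neither by Proposition \ref{prop1} nor by any inductive instance of Corollary \ref{reciprocal_sum}, so the induction you invoke does not close. Second, the bookkeeping ``rational multiple of an appropriate even power of $\pi$, and collecting them preserves the property'' cannot work as stated: since $\pi$ is transcendental, distinct powers of $\pi$ are linearly independent over $\mathbb{Q}$, so the total is a rational multiple of $\pi^{2n(2k+1)}$ only if \emph{every} boundary sub-sum is a rational multiple of exactly $\pi^{2n(2k+1)}$. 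That is in fact true --- the exponent is governed by the number of factors, $2n$, not by the dimension of the summation lattice (in the example above the boundary term $2\zeta(4k+2)$ is indeed rational times $\pi^{2(2k+1)}$) --- but proving it requires a higher-codimension analogue of \eqref{higher_eq}: encoding the sublattice by several torus variables, the sub-sum becomes, up to sign, $\left[\frac{(2\pi)^{2k+1}}{(2k+1)!}\right]^{2n}$ times an integral over $[0,1]^r$ of a product of $\tilde{B}_{2k+1}$ evaluated at rational linear forms, and such an integral is rational. You supply no argument of this kind for the boundary terms, so the proof as written is incomplete. The quickest repair is to observe that the terms with some $L_i(\mathbf{u})=0$ must be excluded anyway for the summand in \eqref{sum2} to be defined; taking the index set to be $\{\mathbf{u}\in\mathbb{Z}^{2n}: L_i(\mathbf{u})\neq0\ \forall i,\ \sum_i c_iu_i=0\}$, the substitution $\mathbf{v}=A\mathbf{u}$ is then a bijection onto the index set of \eqref{sum}, no boundary terms arise, and your argument reduces to the paper's.
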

\begin{proof}
	Let $B=(b_{ij})=A^{-1}$ and use the fact $
	B\begin{pmatrix}
		L_1\\
		L_2\\
		\vdots\\
		L_{2n}
	\end{pmatrix}= \begin{pmatrix}
	u_1\\
	u_2\\
	\vdots\\
	u_{2n}
\end{pmatrix}$ to write the sum \eqref{sum2} in the form of \eqref{sum}. Let $b_i=c_i\sum _{k=1}^{2n}  b_{ji}$. Then the sum \eqref{sum2} is equal to $I(b_1,b_2,\dots , b_{2n})$, which is  a rational multiple of $\pi^{2n(2k+1)}$ by Proposition \ref{prop1}.
\end{proof}
Theorem \ref{theorem_higher_bernoulli} below gives an evaluation of the denominator of $I_{2k+1} (a_1,a_2,\dots, a_{2n})$ for $k\geq 1$, which  takes a different form from Theorem \ref{mcintoshgen} due to the presence of a constant term in the first Bernoill polynomial $B_1(x)=x-\frac{1}{2}$. We refer to Theorem 9, \cite{power-sum} for the value of the denominator of the polynomial $B_n(x)-B_n$ where $B_n$ is the $n$-th Bernoulli polynomial.

\begin{theorem}\label{theorem_higher_bernoulli}
Let $k\in \mathbb{N}_{\geq 1}$, $n\in \mathbb{N}_{\geq 1}$,  $\beta$ be  the least common multiple of the denominators  of the Bernoulli polynomials $B_{2k+1}(x)$ and $B_{(2k+1)(2n-1)+\alpha+1}(x)-B_{(2k+1)(2n-1)+\alpha+1},1\leq \alpha\leq 2k+1$, and
\begin{equation}\label{B}
 B:=lcm\left(\frac{\left[(2k+1)2n+1\right]!}{\left[(2k+1)(2n-1)+1\right]!}\beta^{2n}, (2n+1)\beta^{2n},(2n+2)\beta^{2n},\dots , [(2k+1)(2n-1)+1]\beta^{2n}\right).
 \end{equation}
   Then,
	$$
	f:= B \left[\frac{ X_1^{2n-1}X_3^{2n-3}\cdots  X_{2n-1}}{X_2^2X_4^4\cdots  X_{2n}^{2n}}\right]^{2k+1}I_{2k+1} (a_1,a_2,\dots, a_{2n})$$
	is an integer, where $$X_m= \prod_{1\leq i_1<i_2<\dots < i_m\leq 2n}(a_{i_1},a_{i_2},\dots ,  a_{i_m})$$ is the product of all greatest common divisors of $a_{i_1},a_{i_2},\dots ,  a_{i_m}$ with $1\leq i_1<i_2<\dots < i_m\leq 2n$.
\end{theorem}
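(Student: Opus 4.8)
The plan is to imitate the three-step architecture of the proofs of Theorems~\ref{mcintosh} and~\ref{mcintoshgen}. Writing $L=\mathrm{lcm}(a_1,\dots,a_{2n})$ and $M=f/I_{2k+1}$ (so that $M=B\big[X_1^{2n-1}X_3^{2n-3}\cdots X_{2n-1}/(X_2^2X_4^4\cdots X_{2n}^{2n})\big]^{2k+1}$), I would first reduce to the coprime case $(a_1,\dots,a_{2n})=1$ using Proposition~\ref{prop_gcd}: the integral is scaling-invariant, while each $X_m$ scales by $r^{\binom{2n}{m}}$ under $a_i\mapsto ra_i$, so the bracketed factor is homogeneous of the correct weight and the reduction proceeds exactly as before. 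The discontinuities of the integrand lie at the points $j/a_i$; ordering them as $x_1<\dots<x_N$, on each subinterval $(x_j,x_{j+1})$ we have $\tilde B_{2k+1}(a_ix)=B_{2k+1}(a_ix-n_{ji})$ for integers $n_{ji}$, so the integrand equals a polynomial $P_j(x)$ of degree $(2k+1)2n$. Letting $F_j$ be an antiderivative, $I_{2k+1}=\sum_j\big(F_j(x_{j+1})-F_j(x_j)\big)$, and it suffices to control $LM\,F_j$ at each endpoint $1/a_i$.

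For the integer statement (the analogue of Lemma~\ref{lemma1}), I would view the coefficient of $x^m$ in $F_j$ as a polynomial in $a_1,\dots,a_{2n}$ and track its two sources of denominators: each of the $2n$ Bernoulli factors contributes a denominator dividing $\beta$, so their product contributes $\beta^{2n}$, while integration contributes a factor $1/m$ with $1\le m\le(2k+1)2n+1$. The constant $B$ in \eqref{B} is built precisely so that $m\beta^{2n}\mid B$ for every such $m$, via the individual arguments $m\beta^{2n}$ for $m\le(2k+1)(2n-1)+1$ and via the factorial ratio $\tfrac{[(2k+1)2n+1]!}{[(2k+1)(2n-1)+1]!}\beta^{2n}$ for the top degrees. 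Once these are cleared, evaluating at $1/a_i$ leaves monomials $\prod_l a_{i_l}/a_i^m$, and the factor $\big(X_1^{2n-1}/X_2^2\big)^{2k+1}$ dividing $M$ together with $L$ absorbs the resulting powers of $a_i$, exactly as in Lemma~\ref{lemma1}; the exponent $2k+1$ enters because each Bernoulli factor carries $a_i^{2k+1}$ at top order.

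For divisibility by $a_1$ (the analogue of Lemmas~\ref{lemma2} and~\ref{lemma3}), the evaluations at $1/a_m$ with $m\ne1$ are handled by the same gcd-coprimality identities as before, now with every gcd-exponent multiplied by $2k+1$. For the evaluation at $1/a_1$ I would discard all monomials already divisible by $a_1$, reducing to the main term in which $\tilde B_{2k+1}(a_1x)$ is retained and the remaining factors are replaced by their leading monomials. This reduces the claim to
$$a_1\ \Big|\ LM\int_0^1\tilde B_{2k+1}(a_1x)\prod_{i=2}^{2n}(a_ix)^{2k+1}\,dx.$$
Substituting $u=a_1x-j$ on $(j/a_1,(j+1)/a_1)$ and expanding $(u+j)^{(2k+1)(2n-1)}$ binomially turns this integral into $a_1^{-[(2k+1)(2n-1)+1]}\prod_{i=2}^{2n}a_i^{2k+1}$ times a rational combination of the power sums $\sum_{j=0}^{a_1-1}j^\gamma$, $0\le\gamma\le(2k+1)(2n-1)$, with coefficients built from binomial coefficients and the moments $\int_0^1 B_{2k+1}(u)u^s\,du$. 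By the identity used in \eqref{sumofpowers}, each power sum equals $\big(B_{\gamma+1}(a_1)-B_{\gamma+1}\big)/(\gamma+1)$, a polynomial in $a_1$ with zero constant term and denominator dividing $(\gamma+1)\beta$; multiplying by $B$ clears these, the bracketed factor of $M$ clears $\prod_{i=2}^{2n}a_i^{2k+1}/a_1^{(2k+1)(2n-1)}$, and $a_1\mid L$ supplies the final factor of $a_1$.

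The main obstacle I anticipate is the bookkeeping in the last two steps: verifying that the specific constant $B$ of \eqref{B}, and in particular the interplay of the factorial ratio (the top integration denominators) with the powers $\beta^{2n}$, exactly matches the denominators produced by the power sums of every intermediate degree $\gamma$, so that after clearing them the main term is genuinely an integer multiple of $a_1$. It is precisely the appearance of this full range of intermediate power sums, absent in the linear case $B_1(x)=x-\tfrac12$ treated in Theorem~\ref{mcintoshgen}, that forces the more elaborate form of $B$ and constitutes the technical heart of the argument.
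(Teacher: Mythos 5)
Your proposal tracks the paper's proof faithfully through every step but the last one: the reduction to $(a_1,\dots,a_{2n})=1$ via Proposition \ref{prop_gcd}, the piecewise-polynomial setup with coefficients of denominator dividing $\beta$, the integrality statement (the paper's Lemma \ref{lemma3.1}, including your correct observation that every integration denominator $m\beta^{2n}$, $2n+1\le m\le (2k+1)2n+1$, divides $B$ either as an explicit argument of the lcm or through the factorial ratio), the divisibility of $LM\,F_l(1/a_i)$, $i\ne 1$, by $a_1$ (Lemma \ref{lemma3.2}), and the reduction of the evaluation at $1/a_1$ to the main term $LM\int_0^1\tilde B_{2k+1}(a_1x)\prod_{i=2}^{2n}(a_ix)^{2k+1}\,dx$ (the first half of Lemma \ref{keylemma}). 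The divergence, and the gap, is in how you evaluate this last integral.

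Your substitution $u=a_1x-j$ and binomial expansion of $(u+j)^{P}$, $P=(2k+1)(2n-1)$, produces the full range of power sums $\sum_{j=0}^{a_1-1}j^{\gamma}$ with $0\le\gamma\le P$, and you then assert that $\bigl(B_{\gamma+1}(a_1)-B_{\gamma+1}\bigr)/(\gamma+1)$ has denominator dividing $(\gamma+1)\beta$. That assertion is false in general: $\beta$ is by definition the lcm of the denominators of $B_{2k+1}(x)$ and of $B_{P+\alpha+1}(x)-B_{P+\alpha+1}$ for $1\le\alpha\le2k+1$ only; it controls nothing about $B_{\gamma+1}(x)-B_{\gamma+1}$ for $\gamma\le P$, and Bernoulli-polynomial denominators are not monotone in the index (compare the examples after Theorem \ref{mcintoshgen}, where the relevant denominators for $n=6,8,10,12$ are $6,10,6,210$), so the low-index denominators are not absorbed by the high-index ones in $\beta$, nor in any evident way by the integer factors of \eqref{B}. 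This is precisely the difficulty the paper's proof is engineered to avoid: in Lemma \ref{keylemma} the integral is computed by \emph{repeated integration by parts}, transferring degree from $x^{P}$ onto the sawtooth factor, so that every boundary term and the final integral involve only power sums of degree $P+\alpha$, $1\le\alpha\le2k+1$ --- exactly the indices $P+\alpha+1$ written into the definition of $\beta$ --- while the accumulated integration denominators $(P+1)(P+2)\cdots(P+\alpha+1)$ are absorbed by the factorial-ratio term of \eqref{B}. You flag this mismatch yourself as anticipated ``bookkeeping,'' but it is not bookkeeping: with your decomposition the individual terms genuinely carry denominators that $B$ does not clear (any cancellation would happen only in the full sum, which your term-by-term argument cannot see), so your final step does not go through for the constant $B$ stated in the theorem. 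Replacing the direct binomial expansion by the paper's iterated integration by parts repairs the argument.
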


\begin{proof}
	The ($2k+1$)-th Bernoulli polynomial, defined by \eqref{bernoullipoly} above, can be represented  by \cite[p.~588, no.~24.2.5]{nist}
\begin{gather} B_{2k+1}(x)=\label{ber}\\   x^{2k+1} + \binom{2k+1}{2k} B_1 x^{2k} +\binom{2k+1}{2k-1} B_2 x^{2k-1} +\cdots +\binom{2k+1}{3} B_{2k-2} x^3+\binom{2k+1}{1} B_{2k} x.\notag
\end{gather}
The integrand of \eqref{bernoulliintegral} has discontinuities at $\frac{k}{a_i},1\leq i\leq a_{i} -1$. Let $S=\{ \frac{k}{a_i},1\leq i\leq a_{i} -1\}$ and order the elements in $S$ in increasing order so that $S=\{x_1,x_2,\dots, x_N\}$. On the subinterval $(x_{l},x_{l+1})$, the integrand has the form
\begin{align}\label{integrand}
	P_l(x):=\prod_{i=0}^ {2n}B_{2k+1} (a_ix-n_{l_i}),
\end{align}
where, by \eqref{ber},
 $B_{2k+1} (a_ix-n_{l_i})$ is given by
\begin{align}	\label{bernoulli}
	B_{2k+1} (a_ix-n_{l_i} )=  \sum _{j=0}^{2k+1}  \binom{2k+1}{j}B_{2k+1-j} (a_ix-n_{l_i})^j.
\end{align}

Write
\begin{align}\label{bernoulli_expanded}
	B_{2k+1} (a_ix-n_{l_i} )=(a_ix) ^{2k+1 }  +D_{2k }  (a_ix) ^{2k} +D_{2k-1} (a_ix ) ^{2k-1 }+\cdots +D_1(a_ix),
 \end{align}
where $D_i$, $1\leq i \leq 2k$, are  rational numbers obtained by expanding \eqref{bernoulli}, whose denominators divide $\beta$.
Using \eqref{bernoulli_expanded}, we can rewrite \eqref{integrand} as
\begin{align}
	P_l(x)=&(a_1a_2\cdots a_{2n})^{2k+1} x^{(2k+1)2n}+ D_{2k} \left[\sum_{j=1}^{2n} a_1^{2k+1}a_2^{2k+1} \cdots a_j^{2k} \cdots a_{2n} ^{2k+1}\right]x^{(2k+1) 2n-1} + \nonumber  \\& \cdots +D_1^{2n} a_1\cdots a_{2n} x^{2n} .
\end{align}
Its anti-derivative is given by
\begin{align}
	F_l(x) :=&\frac{(a_1a_2\cdots a_{2n} )  ^{2k+1 }}{(2k+1) 2n+1  }  x^{(2k+1) 2n+1}+ \frac{D_{2k}}{(2k+1) 2n  }   \left[\sum _{j=1} ^{2n} a_1^{2k+1} \cdots a_j^{2k}\cdots a_{2n} ^{2k+1} \right]x^{(2k+1) 2n}\nonumber \\&+ \cdots +\frac{1}{2n+1} D_1^{2n} a_1\cdots a_{2n} x^{2n+1}.
\end{align}

Let $$L:=  \mathrm{lcm}(a_1,a_2\dots, a_{2n}),$$ and
 $$M:= B \left[\frac{ X_1^{2n-1}X_3^{2n-3}\cdots  X_{2n-1}}{X_2^2X_4^4\cdots  X_{2n}^{2n}}\right]^{2k+1},$$
 where $B$ is defined by \eqref{B}.
We want to show that $LM \,I_{2k+1} (a_1,a_2,\dots, a_{2n}) $ is an integer, and that it is divisible by $a_i, 1\leq i \leq 2n, $ separately. By Proposition \ref{prop_gcd}, we may assume  that $(a_1,a_2,\dots, a_{2n})=1$.

\begin{lemma}\label{lemma3.1}
	$LM\, I_{2k+1} (a_1,a_2,\dots, a_{2n}) $ is an integer.
\end{lemma}

\begin{proof}
Since $I_{2k+1} (a_1,a_2,\dots, a_{2n}) $ is a sum of polynomials $F_l(x)$ evaluated at the endpoints with denominators $a_1,a_2,\dots, a_{2n}$, it suffices to show that $LM \,F_l(\frac{1}{a_1})$ is an integer. View the coefficient of $x^i$ in $F_j(x)$ as a polynomial in $a_1,\ \dots ,a_n$. The denominators of the coefficients of this polynomial divide
$$\mathrm{lcm} ((2n+1)\beta^{2n}, (2n+2)\beta^{2n},\dots,[(2k+1)(2n-1)+1]\beta^{2n}),$$
because the denominator of  each $D_i$ divides  $\beta$. Hence, it remains to consider monomials in $a_1,a_2,\dots, a_{2n}$ appearing in the coefficients of $F_l(x)$. The monomial term in the coefficient of $x^m$ is of the form $a_1^{i_1} a_2^{i_2} \cdots a_{2n}^{i_{2n}}$  with $\sum_{j=1}^{2n} i_j=m-1$ and $1\leq i_j\leq 2k+1$. Then $a_1^{i_1} a_2^{i_2} \cdots a_{2n}^{i_{2n}}x^m$ evaluated at $\frac{1}{a_1}$  has denominator
 $${a_1} {\left[\frac{a_1}{(a_1,a_2)}\right]^{i_2}\cdots \left[\frac{a_1}{(a_1,a_{2n})}\right]^{i_{2n}}}.$$
 Since $i_j\leq 2k+1$ and
 $$\frac{X_1^{2n-1}}{X_2^2}=\prod_{i\neq j}\frac{x_i}{(x_i,x_j)}\frac{x_j}{(x_i,x_j)},$$ we see that   $${\left[\frac{a_1}{(a_1,a_2)}\right]^{i_2}\cdots \left[\frac{a_1}{(a_1,a_{2n})}\right]^{i_{2n}}}$$
 divides the factor
 $$ \left[\frac{X_1^{2n-1}}{X_2^2}\right]^{2k+1} $$ in $M$. Moreover, $a_1$ divides $L$, and thus
 $$LM\, a_1^{i_1} a_2^{i_2} \cdots a_{2n}^{i_{2n}} \left(\frac{1}{a_1}\right)^m $$
  is an integer. Hence, we conclude that $LM \,F_l(\frac{1}{a_1}) $ is an integer.
\end{proof}

Next, to show that $LM \,I_{2k+1} (a_1,a_2,\dots, a_{2n}) $ is divisible by $a_i, 1\leq i\leq 2n$, it suffices to show that it is divisible by $a_1$. As in the proof of Lemma \ref{lemma3.1}, it suffices to show that $LM \,F_l(x)$ evaluated at $\frac{1}{a_i}$, for $1\leq i\leq 2n$, is divisible by $a_1$.

\begin{lemma}\label{lemma3.2}
	$LM\, F_l(\frac{1}{a_i})$ is divisible by $a_1$ for $i\neq 1$.
\end{lemma}

\begin{proof}
    By symmetry, it suffices to show that $LM F_l(\frac{1}{a_2})$ is divisible by $a_1$.  As in the proof of Lemma \ref{lemma3.1}, the denominators of  the coefficients of the monomials in $a_1,a_2,\dots, a_{2n}$ are cancelled out by $B$. It suffices to consider the  monomials in the coefficients of $x^m$  that are of the form $a_1^{i_1} a_2^{i_2} \cdots a_{2n}^{i_{2n}}$  with $\sum_{j=1}^{2n} i_j=m-1$ and $1\leq i_j\leq 2k+1$. Then $a_1^{i_1} a_2^{i_2} \cdots a_{2n}^{i_{2n}}x^m$ evaluated at $\frac{1}{a_2}$  has denominator
    $${a_2} {\left[\frac{a_2}{(a_1,a_2)}\right]^{i_1}\cdots \left[\frac{a_2}{(a_1,a_{2n})}\right]^{i_{2n}}}.$$
     Multiplying by $LM$,
     we have $a_2\mid L$,  and
     $$\frac{Ma_1^{i_1} a_2^{i_2} \cdots a_{2n}^{i_{2n}}}{a_2^{m-1}}$$
      is an integer multiple of  \eqref{factor} which is divisible by $a_1$ by the proof of Lemma \ref{lemma2}. Therefore, $a_1$ divides
      $$\frac{LMa_1^{i_1} a_2^{i_2} \cdots a_{2n}^{i_{2n}}}{a_2^{m}}.$$
\end{proof}

\begin{lemma}\label{keylemma}
	$LM \,F_l(\frac{1}{a_1})$ is divisible by $a_1$.
\end{lemma}

\begin{proof}
	As before, it suffices to consider  the  monomials in the coefficients of $x^m$  which are of the form $a_1^{i_1} a_2^{i_2} \cdots a_{2n}^{i_{2n}}$,  with $\sum_{j=1}^{2n} i_j=m-1$ and $1\leq i_j\leq 2k+1$. Then $a_1^{i_1} a_2^{i_2} \cdots a_{2n}^{i_{2n}}x^m$ evaluated at $\frac{1}{a_1}$  has denominator $${a_1} {\left[\frac{a_1}{(a_1,a_2)}\right]^{i_2}\cdots \left[\frac{a_1}{(a_1,a_{2n})}\right]^{i_{2n}}}.$$
	
	If one of $i_2,\dots, i_{2n} <2k+1$, say $i_2$, then multiplying by $LM$, we see that
$$\frac{LMa_1^{i_1} a_2^{i_2} \cdots a_{2n}^{i_{2n}}}{a_1^m}$$
  is an integer multiple of  \eqref{factor3}, i.e.,
   $$\frac{a_1}{(a_1,a_2)}\frac{a_2}{(a_2,a_3)}  \frac{(a_1,a_2,a_3)}{(a_2,a_3,a_4,a_5)}\cdots  \frac{(a_1,\dots, a_{2k-1})}{(a_2,\cdots a_{2k})}.$$
  From the proof of Lemma \ref{lemma2}, this factor is divisible by $a_1$. Thus, it suffices to consider the term $i_2=i_3=\cdots =i_{2n}=2k+1$. Since subtracting terms divisible by $a_1$ does not change the divisibility by $a_1$,  one can ignore  the coefficients $D_{2k},\dots, D_1 $ in  $B_{2k+1}(a_ix-n_{l_i}), 2\leq i\leq 2n$, and it suffices to show that
  $$LM\int_0^1 \tilde{B}_{2k+1}(a_1 x) (a_2x)^{2k+1} \cdots (a_{2n}x)^{2k+1} dx$$
  is divisible by $a_1$.
	
	Since the denominators of the coefficients of $B_{2k+1}$ divide $\beta$, and $\beta^{2n}\mid B$, it suffices to show that
	\begin{align}\label{integral_2k+1}
		LM\int_0^1 ((a_1 x))^\alpha (a_2x)^{2k+1} \cdots (a_{2n}x)^{2k+1} dx
	\end{align}
is divisible by $a_1$, for $1\leq \alpha\leq 2k+1$.
\newcommand{\pp}{(2k+1)(2n-1)}

Letting  $P:=(2k+1)(2n-1)$ and using an integration by parts,  we write \eqref{integral_2k+1} as
\begin{align}
	&LM(a_2\cdots a_{2n} ) ^{2k+1}\sum_{j=0}^{a_1-1} \int_{\frac{j}{a_1}}^\frac{j+1}{a_1}(a_1x-j)^\alpha d\left(\frac{x^{{P}+1}}{P+1}\right)\nonumber \\=&{LM(a_2\cdots a_{2n} ) ^{2k+1}}\left[\sum_{j=0}^{a_1-1}\frac{(j+1)^{P+1}}{(P+1)a_1^{P+1}}-\frac{a_1\alpha}{P+1} \sum_{j=0}^{a_1-1}\int_{\frac{j}{a_1}}^\frac{j+1}{a_1} (a_1x-j)^{\alpha-1}x^{P+1}\,dx\right]\nonumber\\
	=&{LM(a_2\cdots a_{2n} ) ^{2k+1}}\left[\frac{B_{P+2}(a_1) -B_{P+2}}{(P+1)(P+2)a_1^{P+1}}-\frac{a_1\alpha}{P+1} \sum_{j=0}^{a_1-1}\int_{\frac{j}{a_1}}^\frac{j+1}{a_1} (a_1x-j)^{\alpha-1}x^{P+1}\,dx\right].
\end{align}

Continuing with integrations by parts,  we note that at each step the integrated terms have denominators dividing $\frac{\left[(2k+1)2n+1\right]!}{\left[(2k+1)(2n-1)+1\right]!}\beta$, which divides $B$.  Thus, it suffices to show that

\begin{align}\label{integral_final}
	\mathbb{L}:=LM(a_2\cdots a_{2n} ) ^{2k+1}\frac{a_1^{\alpha-1 }\alpha!}{(P+1)(P+2)\cdots(P+\alpha-1)}\sum_{j=0}^{a_1-1}\int_{\frac{j}{a_1}}^\frac{j+1}{a_1} (a_1x-j)x^{P+\alpha -1}\,dx
\end{align}
is divisible by $a_1$ for each $\alpha$, $1\leq \alpha\leq 2k+1$.

Evaluating \eqref{integral_final}, we deduce that
\begin{align*}
	\mathbb{L}=& \frac{LM(a_2\cdots a_{2n} )^{2k+1}a_1 ^{\alpha-1}\alpha!}{(P+1) (P+2)\cdots (P+\alpha -1)}\\
&\times\left\{\frac{ a_1}{P+\alpha+1}-\sum_{j=1}^{a_1-1} \frac{j}{P+\alpha}\left[\left(\frac{j+1}{a_1}\right)^{P+\alpha}-\left(\frac{j}{a_1}\right)^{P+\alpha}\right]\right\}.
\end{align*}
The first expression on the right-hand side above is divisible by $a_1$, since $(P+1)(P+2)\cdots (P+\alpha-1)(P+\alpha+1)$ divides $M$ and $a_1$ divides $L$.
The second expression is equal to
\begin{align}\label{integralfinal_2}
&	- \frac{LM(a_2\cdots a_{2n} )^{2k+1}a_1 ^{\alpha-1}\alpha!}{(P+1) (P+2)\cdots (P+\alpha -1)(P+\alpha )a_1^{P+\alpha}}\sum
_{j=0}^{a_1-1} j\left[(j+1) ^{p+\alpha} -j^{p+\alpha}\right]\nonumber \\
=&- \frac{LM(a_2\cdots a_{2n} )^{2k+1}a_1 ^{\alpha-1}\alpha!}{(P+1) (P+2)\cdots (P+\alpha -1)(P+\alpha )a_1^{P+\alpha}}\left[a_1^{P+\alpha}(a_1-1)-\sum_{j=1} ^{a_1-1}j^{p+\alpha}\right].
\end{align}
The first term on the right-hand side of \eqref{integralfinal_2} is equal to
 $$- \frac{LM(a_2\cdots a_{2n} )^{2k+1}a_1 ^{\alpha-1}\alpha!(a_1-1)}{(P+1) (P+2)\cdots (P+\alpha )},$$
   which is divisible by $a_1$, since ${(P+1) (P+2)\cdots (P+\alpha )}$ divides $M$ and $a_1$ divides $L$. With the aid of \eqref{sumofpowers}, the second term on the right-hand side of \eqref{integralfinal_2} is equal to
\begin{align}\label{second_term}
	\frac{LM(a_2\cdots a_{2n} )^{2k+1}a_1 ^{\alpha-1}\alpha!}{(P+1) (P+2)\cdots (P+\alpha )a_1^{P+\alpha}}\frac{B_{P+\alpha+1}(a_1)-B_{P+\alpha+1}}{(P+\alpha+1)}.
\end{align}
Again, we observe that $B_{P+\alpha+1}(a_1)-B_{P+\alpha+1}$ is a rational polynomial in $a_1$ without a constant term. Recalling that $P=(2k+1)(2n-1)$, we see  that  \eqref{second_term} is an integer multiple of
 $$\frac{LM}{\beta(P+1) (P+2)\cdots (P+\alpha+1)  \left[\prod_{j=2}^{2n}\frac{a_1}{(a_1,a_j)}\right]^{2k+1}}.$$
  Now we see that $\beta(P+1) (P+2)\cdots (P+\alpha+1) $ divides the constant $B$ in $M$, and that $\left[\prod_{j=2}^{2n}\frac{a_1}{(a_1,a_j)}\right]^{2k+1}$ divides the factor $\left[\frac{X_1^{2n-1}}{X_2^2}\right]^{2k+1}$ in $M$. Since $a_1\mid L$, therefore $a_1$ divides  \eqref{second_term}.

  Therefore, we have proved Lemma \ref{keylemma} and thereby also  Theorem \ref{theorem_higher_bernoulli}.
	
\end{proof}
\end{proof}

\end{document}